\newtheorem{theorem}{Theorem}[section]
\newtheorem{corollary}[theorem]{Corollary}
\newtheorem{lemma}[theorem]{Lemma}
\newtheorem{proposition}[theorem]{Proposition}
\theoremstyle{definition}
\newtheorem{definition}[theorem]{Definition}
\theoremstyle{remark}
\newtheorem{rem}[theorem]{Remark}
\theoremstyle{definition}
\newtheorem{example}[theorem]{Example}
\numberwithin{equation}{section}
\newcommand{\set}[1]{\left\{#1\right\}}
\newcommand{\R}{\mathbb R}
\newcommand{\N}{\mathbb N}
\newcommand{\AAA}{\mathcal{A}}
\newcommand{\escrate}[1]{\gamma_{#1}}
\newcommand{\primew}{w_{_P}}
\newcommand{\maxw}{w_{_M}}
\newcommand{\mw}[1]{\mu(w_{#1})}
\newcommand{\m}{\mu}
\begin{document}

\title{Maximal escape rate for shifts}

\author{Claudio Bonanno}
\address{Dipartimento di Matematica, Universit\`a di Pisa, Largo Bruno Pontecorvo 5, 56127 Pisa, Italy}
\email{claudio.bonanno@unipi.it}

\author{Giampaolo Cristadoro}
\address{Dipartimento di Matematica e Applicazioni, Universit\`a di Milano - Bicocca, Via Roberto Cozzi 55, 20125 Milano, Italy}
\email{giampaolo.cristadoro@unimib.it}

\author{Marco Lenci}
\address{Dipartimento di Matematica, Universit\`a di Bologna, Piazza di Porta San Donato 5, 40126 Bologna, Italy, \and Istituto Nazionale di Fisica Nucleare, Sezione di Bologna, Via Irnerio 46, 40126 Bologna, Italy}
\email{marco.lenci@unibo.it}

\date{}

\subjclass[2020]{37B10, 37A05, 37E05}
\keywords{Open dynamical systems, symbolic dynamics, Markov holes, maximal escape rate, piecewise linear maps}
\thanks{The authors are partially supported by 
the PRIN Grant 2017S35EHN ``Regular and stochastic behaviour in dynamical 
systems'' of the Italian Ministry of University 
and Research (MUR), Italy. This research is part of the authors' activity within 
the UMI Group ``DinAmicI'' (\texttt{www.dinamici.org}) and the 
Gruppo Nazionale di Fisica Matematica, INdAM, Italy.}
% ----------------------------------------------------------------
\begin{abstract}
We consider the shift transformation on the space of infinite sequences over a finite alphabet endowed with the invariant product measure, and examine the presence of a \emph{hole} on the space. The holes we study are specified by the sequences that do not contain a given finite word as initial sub-string. The measure of the set of sequences that do not fall into the hole in the first $n$ iterates of the shift is known to decay exponentially with $n$, and its exponential rate is called \emph{escape rate}. In this paper we provide a complete characterization of the holes with maximal escape rate. In particular we show that, contrary to the case of equiprobable symbols, ordering the holes by their escape rate corresponds to neither the order by their measure nor by the length of the shortest periodic orbit they contain. Finally, we adapt our technique to the case of shifts endowed with Markov measures, where preliminary results show that a more intricate situation is to be expected.
\end{abstract}
\maketitle
% ----------------------------------------------------------------

\section{Introduction} \label{sec:intro}

Given a measure-preserving transformation $T$ of a probability space $(X,\mu)$, a \emph{hole} of the associated dynamical system $(X,\mu,T)$ is a measurable set $H\subset X$ with $\mu(H)>0$ such that when $T^k(x) \in H$ the orbit of $x$ escapes from $X$. The presence of a hole is often realized by modifying $T$ so that it is not defined on $H$ (another option is to define the maps as the identity inside the hole).
The investigation of properties of open systems is more than forty years old; the first attempts can be traced back to the late '70s. See  \cite{PY79,FKMP95} and references therein for the first approaches to the problem.
This kind of system has  often been used to model different situations of interest for the  physics community. A recent review dealing with such applications can be found in \cite{APT13}.

Let us introduce the basic properties of the open systems we consider. When the probability measure $\mu$ is ergodic, almost every orbit enters $H$ at some finite time, hence almost every orbit escapes.
Let
\[
S_n:= \set{x\in X \, :\, T^i(x)\not\in H\, , \, \forall\, i=0,\dots,n}
\]
be the set of points which do not escape up to time $n$, and define the \emph{survival probability at time $n$} as $p_n:= \mu(S_n)$. The sequence $p_n$ is decreasing and vanishing as $n\to \infty$, and we consider its exponential rate of convergence.

\begin{definition} \label{surv-prob} The \emph{escape rates} $\gamma_H^\pm$ of the set $H$ are the exponential rates of convergence to 0 of the survival probability, namely
\[
\gamma_H^-:= \liminf_{n\to \infty}\, \frac{-\log p_n}{n}\, \quad \text{and} \quad \gamma_H^+:= \limsup_{n\to \infty}\, \frac{-\log p_n}{n}.
\]
\end{definition}

We consider only $\escrate{H}:=\gamma_H^-$,  since it can be studied by the classical method of  generating functions, and we refer to it as the \emph{escape rate of $H$}.

When $T$ has exponential decay of correlations, a finite non-zero escape rate is to be expected for generic holes
(see \cite{DY06,DWY12} for precise conditions on $T$ and a more extensive discussion on this topic).
A variety of systems satisfy this property: paradigmatic examples are expanding maps, Anosov diffeomorphisms, and dispersing billiards \cite{CMT98, CdB02,LM03,DWY10} (but many more cases are studied in the literature). Other aspects that have been investigated are the relations with other statistical properties of  dynamical systems and thermodynamic methods (see \cite{DT17a,BDT18}). Finally, there has been some recent interest also in open systems exhibiting sub-exponential rates of escape \cite{FMS11,MK,BC,DT17b}.

Among the different properties of open systems, much effort has been devoted to the study of the escape rate as a function of certain parameters, such as the size and the position of the hole.
In this context, a recurrent question in the literature concerns the identification of the hole with maximal escape rate among holes in a given family (e.g., in the set of holes with the same measure).
It turns out that a peculiar role is played by the structure of the periodic orbits of the system: as an example, it has been shown that in certain systems the different escape rates for holes with the same measure can be ordered according to the shortest period of the periodic points contained in the hole \cite{BY}. Moreover, using a perturbative approach, it has been shown in \cite{KL09} that the escape rate for holes shrinking to a periodic point has a non-trivial dependence on the stability of the limiting orbit.
While these results provide an answer to the question of where to place a hole to achieve maximal escape for some specific situations, such as the small-hole limit, a complete understanding of the generic case is not yet available, to our knowledge.
This work provides a complete answer to this question for full shifts on infinite sequences over a finite number of symbols. It is well known that such a symbolic dynamical system is isomorphic to an appropriate piecewise linear map on the interval $[0,1]$ with full branches. There are many other examples of systems isomorphic to a full shift, e.g., the logistic map $T(x)=4x(1-x)$ on $[0,1]$. Thus our results apply to these isomorphic systems as well.

More in detail, in Theorem \ref{thm-elem-2}, we show that in order to find the maximal escape rate for a fixed length of the forbidden word, it is enough to consider two specific holes. This characterization is more precise in the case of full shifts over two symbols, where, based on the probability of the most probable symbol, we are able to determine which of the two holes achieves maximal escape rate (Theorem \ref{thm:biggest-two-symb}) and to estimate this rate from below and from above (Corollary \ref{cor:stime}).

\section{The setting} \label{sec:escape}
Consider the symbolic dynamical system $(\AAA^{\N}, \mu,\sigma)$ defined by the left shift transformation $\sigma$ acting on the space $\AAA^{\N}$ of semi-infinite words with symbols from a finite alphabet  $\AAA=\{a_1,a_2,\dots,a_A\}$, endowed with a product probability measure $\m$.  The  measure $\m$ is determined by a probability vector $\{p_{a_1},\dots,p_{a_A}\}$ such that  $p_{a_j}>0$ for all $j$.

Since $\sigma$ preserves $\mu$ and is ergodic, we can study the escape rates for holes in $\AAA^{\N}$. In this context, cylinders are the natural and often studied choice for  holes. This corresponds to fixing a finite word $w = (w_0\, w_1\dots w_{r-1}) \in \AAA^r$ and letting the hole $H$  be the set of all infinite words in $\AAA^{\N}$ containing $w$ as the initial sub-word. Throughout, we denote a hole of this kind by the finite word specifying it. Also, the \emph{length of a hole} denotes the length of the corresponding word.

By the previous construction we are led to use combinatorial arguments in our approach to the escape rate. We first recall some basic notions from \cite{FS}, starting with the definition of \emph{weighted autocorrelation polynomial} of a word.

\begin{definition}\label{count-fun}
Let $w \in \AAA^*:=\cup_{n=1}^\infty \AAA^n$ be a finite word and denote by $|w|$ its length. For any letter $a$ in the alphabet $\AAA$ we define the \emph{number of occurrences of $a$ in a sub-word of $w$} as
\[
N_w(a,k,n) := \left\{ \begin{array}{ll}
\# \set{i \in [k,n-1]\, :\, w_i =a}\, ,& \text{for }\, 0\le k< n\le |w|\, ;\\[0.2cm]
0\, , &  \text{for }\, k=n\, .
\end{array} \right.
\]
For simplicity we use the notation $N_w(a):= N_w(a,0,|w|)$.
\end{definition}

\begin{definition}\label{wap}
Let $\AAA=\{a_1,\dots,a_A\}$ and $w\in \AAA^n$. The \emph{autocorrelation vector} $c=(c_0,\dots,c_{n-1})$ of $w$ is defined by setting
\[
c_i=\left\{ \begin{array}{ll} 1\, , & \text{if } (w_i\, w_{i+1}\, \dots\, w_{n-1}) = (w_0\, w_1\, \dots\, w_{n-1-i})\, ;\\[0.2cm] 0\, , & \text{otherwise.} \end{array} \right.
\]
The \emph{weighted autocorrelation polynomial} of $w$ is a polynomial in $A+1$ variables, $x_{a_1},x_{a_2},\dots,x_{a_A}$ and $z$, given by
\[
c_w(x_{a_1},x_{a_2},\dots,x_{a_A},z) := \sum_{j=0}^{n-1}\, c_j\, \Big( \prod_{a_i\in \AAA}\, (x_{a_i})^{N_w(a_i,n-j,n)} \Big)\, z^j.
\]
\end{definition}
By definition, the weighted autocorrelation polynomial has non-negative coefficients and in particular $c_0=1$.

\begin{definition}\label{prime-hole}
A hole $w$ of finite length is called \emph{prime} if its autocorrelation vector is $c=(1,0,\ldots,0)$, whence $c_w(x_{a_1},x_{a_2},\dots,x_{a_A},z)=1$.
\end{definition}

We now show that the escape rate of a hole $w$ is the logarithm of a root of a polynomial depending on the measure of the set $\mw{}$ and on the weighted autocorrelation polynomial of $w$. A similar result can be found in \cite{agar,agar2}. The proof of the proposition is in Appendix \ref{app:proof-thm-iid} and is based on the notion of the generating function of the survival probability of the hole.

\begin{proposition}\label{prop:escape-rate}
The escape rate $\escrate{w}$ of a hole $w$ of length $r$ is given by
\[
\escrate{w} = \log \, z_0,
\]
where $z_0$ is the smallest positive root of the polynomial
\begin{equation} \label{pol-tau}
\tau_{w}(z) := \mw{}\, z^r + (1- z)\, c_w(p_{a_1},p_{a_2},\dots,p_{a_A},z),
\end{equation}
$\mw{} = \prod_{a_j\in \AAA}\, (p_{a_j})^{N_w(a_j)}$ is the measure of the hole $w$, and $c_w(x_{a_1},x_{a_2},\dots,x_{a_A},z)$ is the weighted autocorrelation polynomial of $w$.
\end{proposition}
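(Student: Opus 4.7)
My plan is to derive a closed rational expression for the generating function $P(z):=\sum_{n\ge 0}p_n\,z^n$ of the survival probabilities and identify $z_0$ with its radius of convergence.

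The first step is a reduction to combinatorics on words. Note that $x\in S_n$ iff $w$ does not occur starting at any of the positions $0,1,\dots,n$ of $x$, an event depending only on $x_0\cdots x_{n+r-1}$. Writing $q_m$ for the total $\m$-measure of the cylinders in $\AAA^{\N}$ defined by length-$m$ words over $\AAA$ that contain no occurrence of $w$, we have $p_n=q_{n+r}$. The second step is to invoke the weighted Guibas-Odlyzko identity from \cite{FS}: by the standard cluster/correlation decomposition of an arbitrary word as an element of the avoidance set possibly extended by a first occurrence of $w$, with the overlap bookkeeping producing the autocorrelation polynomial, one obtains
\[
\sum_{m\ge 0}q_m\,z^m \;=\; \frac{c_w(p_{a_1},\dots,p_{a_A},z)}{\tau_w(z)}.
\]
Since $q_m=1$ for $m<r$ (no word shorter than $r$ can contain $w$), a shift of index together with the identity $1-z^r=(1-z)(1+z+\cdots+z^{r-1})$ yields
\[
P(z) \;=\; \frac{c_w(p_{a_1},\dots,p_{a_A},z)-\mw{}\,(1+z+\cdots+z^{r-1})}{\tau_w(z)}.
\]

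The final step is to pin down the radius of convergence $R$ of $P$. Because $P$ is rational with $\tau_w(0)=c_0=1\ne 0$ and its numerator has degree strictly less than $\deg\tau_w=r$, the function is analytic at $0$ but not entire, so $0<R<\infty$. Since $p_n\ge 0$, Pringsheim's theorem forces $R$ to be a positive real zero of $\tau_w$ (after any cancellation with the numerator), so $R\ge z_0$. For the reverse inequality I evaluate the numerator at $z_0$: using $\mw{}\,z_0^r=(z_0-1)\,c_w(p_{a_1},\dots,p_{a_A},z_0)$ from $\tau_w(z_0)=0$, the numerator collapses to $c_w(p_{a_1},\dots,p_{a_A},z_0)/z_0^r$, which is strictly positive since $c_0=1$ and $c_w$ has non-negative coefficients. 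Hence $z_0$ is a genuine pole of $P$ and $R\le z_0$. Combining, $R=z_0$, and the Cauchy-Hadamard formula gives $\escrate{w}=\liminf_n(-\log p_n)/n=\log R=\log z_0$.

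I expect the main obstacle to be the weighted Guibas-Odlyzko identity, where one must enumerate occurrences of $w$ inside an arbitrary long word and re-sum via a bijection with the autocorrelation shifts, carefully tracking the product-measure weights contributed by the overlap regions. The cancellation check at $z_0$ and the Pringsheim/Cauchy-Hadamard argument are routine once the rational form of $P$ is in hand.
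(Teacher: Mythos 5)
Your proposal is correct and takes essentially the same route as the paper: Appendix \ref{app:proof-thm-iid} likewise reduces $p_n$ to the weighted count of $w$-avoiding words, establishes the weighted Guibas--Odlyzko identity (Lemma \ref{res-fund}, which the paper proves by adapting \cite[Prop.~I.4]{FS} rather than citing it outright), and reads off $\escrate{w}$ from the smallest positive zero of $\tau_w$ via the generating function $P(z)$. If anything, your endgame is slightly more explicit than the paper's, since you verify the non-vanishing of the numerator at $z_0$ and invoke Pringsheim to rule out smaller complex poles, steps the paper leaves implicit.
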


Note that,  for all $r\ge 1$ and all $w\in \AAA^r$ we have $\tau_{w}(z)>0$ for all $z\in [0,1]$, and in particular $z_0>1$, so that $\escrate{w}>0$.
%\vspace{0.3cm}

Among the different  polynomials $\tau_w$ for the different possible words $w$, two families stand out as particularly relevant for what follows: the polynomials for prime words and the polynomials for words that are repetitions of a single symbol, as for example  $w=(aaa\dots a)$.

For a prime hole $w$ of length $r$ and measure $\m$,  it follows immediately  from  \eqref{pol-tau}  that
\begin{equation} \label{pol-prime}
\tau_{w}(z)= \m z^r -z+1.
\end{equation}

For a hole of the form $w=(aaa\dots a)$ of length $r$, we start computing explicitly the weighted autocorrelation polynomial
\[
c_{w}(x_{a},x_{a_2},\dots,x_{a_A},z) = \sum_{j=0}^{r-1}\, x_a^j\, z^j = \frac{x_a^r\, z^r -1}{x_a\, z-1},
\]
where we are using the notation of Definition \ref{wap} with the convention $a=a_1$ for simplicity. Then
\begin{eqnarray}
\tau_{w}(z)&=& p_a^r\, z^r +(1-z)\, \frac{p_a^r\, z^r -1}{p_a\, z-1} \nonumber\\
&=& \frac{p_a^r(1-p_a)\, z^{r+1}-z+1}{1-p_a z}\label{pol-constant}.
\end{eqnarray}

Note that the numerator in \eqref{pol-constant} is a polynomial belonging to the previous family \eqref{pol-prime}; more precisely, it is the polynomial  of a prime hole of length $r+1$ (this connection was already found in \cite{CDK}). Moreover, both the numerator and the denominator of \eqref{pol-constant} vanish at $\frac{1}{p_a}$ (this will be useful below).

In the next lemma, we show some elementary properties of the family of polynomials in \eqref{pol-prime} that will be useful in the derivation of our main results in next section.

\begin{lemma}\label{lemma-tau}
Let us consider the family of polynomials
\[
f_m(z) = m\, z^r -z+1
\]
for a fixed $r\ge 2$ and $m\in \R^+$. Then:
\begin{enumerate}[(i)]
\item $f_m(z)$ is convex for all $m\in \R^+$ in the set $(0,+\infty)$.
\item Letting $m^*_r:= \frac 1r (1-\frac 1r)^{r-1}$ and $z^*_r(m):= ( r\, m)^{-\frac{1}{r-1}}$, one has
\[
f_m(z^*_r(m)) \left\{ \begin{array}{ll} <0 \, , & \text{if } m< m^*_r;\\[0.2cm] =0\, , & \text{if } m=m^*_r;\\[0.2cm] >0\, , & \text{if } m>m^*_r; \end{array} \right.
\]
and $ z^*_r(m^*_r)=\frac{r}{r-1}$.
\item The polynomial $f_m(z)$ has two positive roots for $m\in (0,m^*_r)$, one positive root for $m=m^*_r$, no positive roots for $m>m^*_r$.
\end{enumerate}
\end{lemma}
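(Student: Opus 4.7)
The plan is to treat all three parts as one short calculus exercise about the one-parameter family $f_m(z) = mz^r - z + 1$, using convexity together with the formula for its unique positive critical point. For (i), I would simply compute the second derivative $f_m''(z) = m\,r(r-1)\,z^{r-2}$, which is strictly positive on $(0,+\infty)$ since $m>0$ and $r\geq 2$; this gives strict convexity on the whole positive half-line.

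For (ii), I would locate the unique critical point of $f_m$ on $(0,+\infty)$ by solving $f_m'(z) = mrz^{r-1} - 1 = 0$, obtaining precisely $z^*_r(m) = (rm)^{-1/(r-1)}$. The crucial simplification is to exploit the defining identity $m\,(z^*_r(m))^{r-1} = 1/r$, equivalently $m\,(z^*_r(m))^r = z^*_r(m)/r$, which collapses $f_m$ at the critical point to
\[
f_m(z^*_r(m)) = 1 - \frac{r-1}{r}\, z^*_r(m).
\]
The sign is therefore determined by the comparison of $z^*_r(m)$ with $r/(r-1)$. A direct manipulation of the exponent shows that $z^*_r(m) = r/(r-1)$ exactly when $rm = (1-1/r)^{r-1}$, i.e.\ when $m = \frac{1}{r}\bigl(1-\frac{1}{r}\bigr)^{r-1} = m^*_r$; since $z^*_r(m)$ is strictly decreasing in $m$, the three-case statement of (ii) follows at once, together with the identity $z^*_r(m^*_r) = r/(r-1)$.

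For (iii), I would combine convexity (part (i)) with the boundary behaviour $f_m(0) = 1 > 0$ and $\lim_{z\to +\infty} f_m(z) = +\infty$. A strictly convex function on $(0,+\infty)$ with these boundary values and a unique interior minimiser has exactly zero, one, or two zeros on $(0,+\infty)$ according to whether its minimum value is positive, zero, or negative; since the unique minimiser is $z^*_r(m)$, the three cases of (iii) read off directly from the sign analysis performed in (ii). The only step that demands some care is the simplification of $f_m(z^*_r(m))$ without getting tangled in fractional exponents, but the substitution $m(z^*_r(m))^{r-1} = 1/r$ handles this cleanly; beyond that, I do not expect any genuine obstacle.
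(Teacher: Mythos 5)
Your proposal is correct and follows the same route as the paper's (much terser) proof: identify $z^*_r(m)$ as the unique critical point and global minimiser via $f_m'(z)=mrz^{r-1}-1$, determine the sign of $f_m(z^*_r(m))$ by the computation you carry out with the substitution $m(z^*_r(m))^{r-1}=1/r$, and combine with convexity and the boundary values to count the positive roots. You have simply filled in the details the paper leaves as ``obvious'' and ``a computation,'' so nothing further is needed.
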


\begin{proof}
(i). Obvious.\\
(ii) and (iii). Notice that $f_m'(z)=0$ if and only if $z=z^*_r(m)$, hence $z^*_r(m)$ is a point of local minimum. The sign of $f_m(z^*_r(m))$ is a computation.
\end{proof}

\section{The hole with maximal escape rate} \label{sec:bigger}

In this section we show how to determine the hole with maximal escape rate. That is, we study how the escape rate varies among the holes of length $r$ and we determine which one has the maximal escape rate.  We start this section by discussing one of the simplest examples in our setting, that is, the full shift over two symbols only: this example suffices to show the difficulties that arise when trying to order all holes of fixed length $r$ by their escape rate. Indeed, the first discussion of this problem in the context of dynamical systems can be found in \cite{BY}: Bunimovich and Yurchenko showed that if the symbols of $\AAA$ are equiprobable, that is, $p_{a_j}=\frac 1A$ for all $j=1,\dots,A$, all the holes can be ordered according to their escape rate just by looking at the length of the shortest periodic pattern they contain. In the following example we show that, contrary to the above simpler case, if the symbols have different probabilities, looking only at the periodic patterns in a hole is not enough to determine the hole with maximal escape rate.
Our main result proves that the measure of a hole, its weighted autocorrelation polynomial and the probability of the most probable symbol are the essential ingredients to identify the maximal escape rate for all possible shifts.

\begin{example}\label{two-symbols}
Let $\AAA=\set{a,b}$ and $p=p_a$, $q=p_b$, with $p+q=1$. We restrict to $p\ge q$  (that is $p\in [\frac 12,1]$) as the other cases can be recovered by interchanging the symbols of the alphabet.
We use  Proposition \ref{prop:escape-rate}  to compute explicitly the escape rates of all holes with length $r\le 4$: for these short lengths, the maximal escape rate can be identified by an explicit computation.
\begin{figure}[htp]
\begin{center}
\includegraphics[width=10cm,keepaspectratio]{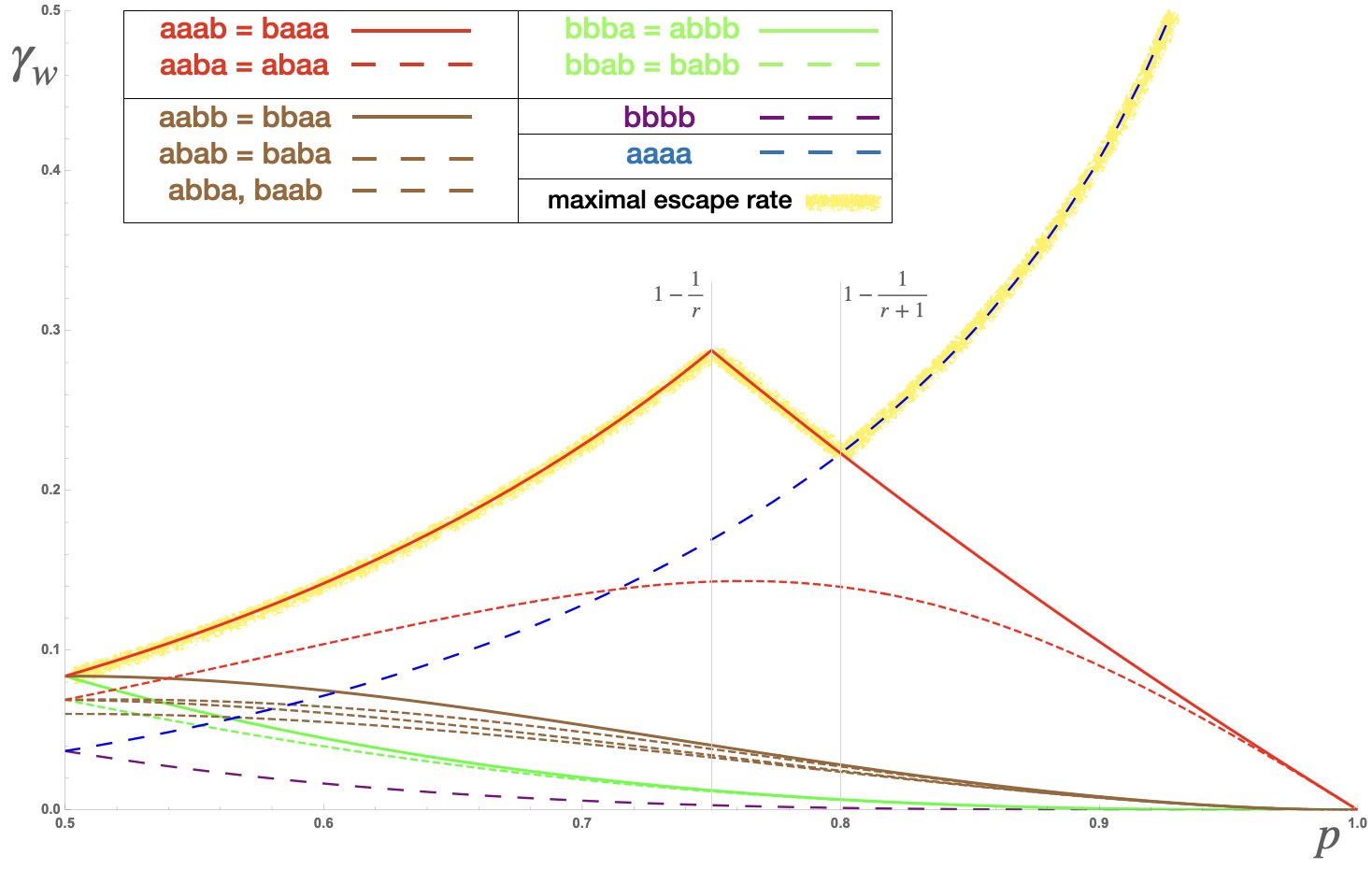}
\caption{The escape rates $\escrate{w}$ for $p\in [\frac 12,1]$, for all holes of length $r=4$.} \label{z0-four-simb}
\end{center}
\end{figure}
\end{example}

\begin{itemize}
\item[$\bullet \, (r=1)$] It is an elementary computation to show that the word $w=(a)$ has the maximal escape rate as $\escrate{a}=- \log (1-p)=- \log  q$, and $\escrate{b}=- \log p$.
\vspace{0.2cm}
\item[$\bullet \, (r=2)$] We have two families:
\begin{itemize}
\item[(i)] $w \in \{(aa), (bb)\}$. Let us start from the case $w=(aa)$. The autocorrelation vector is $c=(1,1)$, and $N_{aa}(a,1,2) =1$, $N_{aa}(b,1,2)=0$, hence the weighted autocorrelation polynomial is
\[
c_{aa}(x,y,z) = 1+ xz.
\]
Moreover $N_{aa}(a)=2$ and $N_{aa}(b)=0$, hence
\[
\tau_{aa}(z) = p^2 z^2 + (1-z) \, c_{aa}(p,q,z) = 1-qz -pq z^2,
\]
from which $z_0 = \frac{-q + \sqrt{q^2+4pq}}{2pq}$ and $\escrate{aa}= \log (\frac{-q+\sqrt{q^2+4pq}}{2pq})$.

The case $w=(bb)$ works as the previous one, we simply need to interchange the roles of $p$ and $q$. Hence $\escrate{bb}= \log (\frac{-p+\sqrt{p^2+4pq}}{2pq})$.

\item[(ii)] $w \in \{(ab), (ba)\}$ Let us start from the case $w=(ab)$.  The autocorrelation vector  is $c=(1,0)$, hence the weighted autocorrelation polynomial is
\[
c_{ab}(x,y,z) = 1.
\]
Moreover $N_{ab}(a)=1$ and $N_{ab}(b)=1$, hence
\[
\tau_{ab}(z) = pq z^2 + (1-z) \, c_{ab}(p,q,z) =(1-pz)(1-qz),
\]
from which $z_0 = \frac 1p$ since $p\ge q$. Hence $\escrate{ab}= - \log p$.

For $w=(ba)$ it turns out that $\tau_{ba}(z)= \tau_{ab}(z)$, hence $\escrate{ba}= \escrate{ab}$. \vspace{0.1cm}
\end{itemize}

Note that for $\frac 12 \le p<\frac 23$, the holes with maximal escape rate are $(ab)$ and $(ba)$, whereas for $p>\frac 23$, the hole with maximal escape rate is $(aa)$ . At the same time, for all values of $p\in (\frac 12,1]$, the hole $(aa)$ has measure $\m(aa)=p^2$, greater than the measure $\m(ab)=\m(ba)=pq$.
\vspace{0.2cm}
\item[$\bullet \, (r=3)$] This case is similar to the  case $r=4$ discussed below. Some further details can be found in \cite{CDK}.
\vspace{0.2cm}
\item[$\bullet \, (r=4)$]
Instead of giving explicit formulas (no more difficulties arise in their derivation than in the case  $r=2$, but the formulas are longer and not very informative), it is more illustrative to plot the different escape rates (see Fig.~\ref{z0-four-simb}) and discuss some important features. Firstly, note that for $p=\frac 12$ all holes have the same measure and their escape rates can be ordered by looking at the length of the shortest periodic pattern they contain, just as discussed in \cite{BY}. On the other hand, it is apparent by Fig.~\ref{z0-four-simb} that this ordering is destroyed as soon as the two symbols are not equiprobable. Nevertheless some new patterns can be derived.

In the figure we have used the following notation: curves with the same color correspond to holes with the same measure; solid curves correspond to prime holes and dashed curves to non-prime ones (the dashed blue and purple curves have different styles as they correspond to holes for which there is no prime hole with the same measure). First, it is  apparent that in the family of holes with the same measure, prime holes (if they exist) are the leakiest (see Lemma \ref{lemma-elem-1}-(i)). For example, the red curves are the plot of the escape rates of the holes of measure $p^3(1-p)$. Among them, the two prime holes $(aaab)$ and $(baaa)$ have (the same) maximal escape rate (look at the solid red curve). On the other hand, the escape rates of prime holes with different measures are ordered, for a fixed $p$, by their measure (see Lemma \ref{lemma-elem-1}-(ii)). One is then tempted to say that the maximal escape rate among all the holes of fixed length should be achieved by the prime hole with maximal measure (red solid curve). But this is not true for all values of $p$, as  is apparent by looking at Fig.~\ref{z0-four-simb}: for $p> 1-\frac{1}{r+1}=\frac 45$ (see Theorem \ref{thm:biggest-two-symb}) the maximal escape rate is given by the hole $(aaaa)$, i.e., the hole with the repetition of the most probable symbol. Finally, the escape rate of the  hole $(aaaa)$ is always greater than that of the similar hole $(bbbb)$ (see Lemma \ref{lemma-elem-1}-(iii)).
\end{itemize}
The formal derivation of the hole with maximal escape rate is our most important result. We give it in the next section.

\subsection{Main results}
Let the alphabet $\AAA$ with $A\ge 2$ symbols be fixed and  denote by $a$ the most probable symbol  and by $b$  the second most probable one. Hence we rename by $\{a,b,a_3,\dots,a_A\}$ the symbols in $\AAA$, with $p= p_a$, $q=p_b$; $q+p\le 1$  and $p_{a_k}\le q \le p$ for all $k\ge 3$.

As anticipated, an important role is played by prime holes and by holes which have maximal measure for a fixed length.
Using the expression of the polynomial $\tau_{w}$ in \eqref{pol-tau}, we obtain a few basic  inequalities which arise when comparing their escape rates:

\begin{lemma}\label{lemma-elem-1} Let us consider a fixed $r\ge 2$. Then:
\begin{enumerate}[(i)]
\item Let $w_1$ and $w_2$ be holes of the same length and measure. If $w_1$ is prime then $\escrate{w_1} \ge \escrate{w_2}$, and  equality holds only if $w_2$ is also prime.
\item Let $w_1$ and $w_2$ be prime holes of the same length. If $\mw{1}> \mw{2}$ then $\escrate{w_1}> \escrate{w_2}$.
\item Let $w_1=(a_i a_i\dots a_i)$ and $w_2=(a_j a_j\dots a_j)$ be holes of the same length given by repetitions of different symbols. If $p_{a_i}\ge p_{a_j}$ then $\escrate{w_1}\ge \escrate{w_2}$.
\end{enumerate}
\end{lemma}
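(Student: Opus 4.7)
The common strategy for all three parts is a comparison argument on the polynomials $\tau_w$ at their smallest positive roots. Since each $\tau_w$ is strictly positive on $[0,1]$ (as remarked after Proposition~\ref{prop:escape-rate}) and $z_0$ is defined as the smallest positive root, showing that $\tau_{w_1}(z) \ge \tau_{w_2}(z)$ on $[1, z_0^{(2)}]$ forces $\tau_{w_1}$ to stay non-negative up to $z_0^{(2)}$, hence $z_0^{(1)} \ge z_0^{(2)}$ and $\escrate{w_1} \ge \escrate{w_2}$.

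For~(i): subtracting the two instances of~\eqref{pol-tau} with common measure $m$ gives
\[
\tau_{w_1}(z) - \tau_{w_2}(z) = (1-z)\bigl[c_{w_1}(p_{a_1},\ldots,p_{a_A},z) - c_{w_2}(p_{a_1},\ldots,p_{a_A},z)\bigr].
\]
Primeness of $w_1$ makes $c_{w_1}\equiv 1$, while the non-negativity of the coefficients of $c_{w_2}$ together with $c_0=1$ forces $c_{w_2}(p_{a_1},\ldots,p_{a_A},z)\ge 1$ for $z\ge 0$, strictly so at $z>0$ unless $w_2$ is prime. Hence for $z\ge 1$ the difference above is $\ge 0$ (strict for $z>1$ in the non-prime case), and evaluating at $z_0^{(2)}$ gives $\tau_{w_1}(z_0^{(2)})\ge 0$, whence $z_0^{(1)}\ge z_0^{(2)}$ with the expected strictness. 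For~(ii) both polynomials are of the form $f_{m_j}(z)=m_jz^r-z+1$ from~\eqref{pol-prime}, and $f_{m_1}(z)-f_{m_2}(z)=(m_1-m_2)z^r>0$ for $z>0$; evaluating at $z_0^{(1)}$ yields $f_{m_2}(z_0^{(1)})<0$, which places $z_0^{(1)}$ strictly beyond the first zero $z_0^{(2)}$ of $f_{m_2}$.

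Part~(iii) is the most delicate. Using~\eqref{pol-constant} one writes $\tau_{w_i}(z)=\phi(p_{a_i},z)/(1-p_{a_i}z)$ with $\phi(p,z):=p^r(1-p)\,z^{r+1}-z+1$, which is the prime-hole polynomial of length $r+1$ and measure $m(p):=p^r(1-p)$. By Lemma~\ref{lemma-tau}, $\phi(p,\cdot)$ has two positive roots $\zeta_-(p)\le \zeta_+(p)$ whenever $m(p)<m^*_{r+1}$, and the text already noted that $z=1/p$ is always one of them. A direct comparison of $1/p$ with the minimum location $(r+1)/r$ of $\phi$ shows that $1/p=\zeta_+(p)$ for $p<r/(r+1)$ and $1/p=\zeta_-(p)$ for $p>r/(r+1)$; consequently the smallest positive root $z_0(p)$ of $\tau_{w_i}$ is the other root $\zeta_\mp(p)$, the two regimes joining continuously at $(r+1)/r$ when $p=r/(r+1)$. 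A sign computation on the partial derivatives of $f_m$ exactly like the one in~(ii) (and used implicitly in Lemma~\ref{lemma-tau}) shows that $\zeta_-$ is increasing and $\zeta_+$ is decreasing as functions of $m$. Combining this with the fact that $m(p)=p^r(1-p)$ is itself increasing on $[0,r/(r+1)]$ and decreasing on $[r/(r+1),1]$, the two monotonicities conspire to make $z_0(p)$ non-decreasing in $p$ in both regimes, which yields~(iii).

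The main obstacle is precisely the non-monotonicity of $m(p)$ in~(iii): one cannot simply invoke~(ii), since increasing $p_{a_i}$ need not increase the hole's measure. The workaround is the observation that the role of $1/p$ switches from $\zeta_+$ to $\zeta_-$ exactly at the point where $m(p)$ reverses its monotonicity, so that on each side of $r/(r+1)$ the relevant root is tracked by a single monotone branch. Handling this transition—and in particular the degenerate case $p=r/(r+1)$, where $\phi$ has a double root at $1/p$ and the rational form of $\tau_{w_i}$ has to be interpreted by continuity—is the one subtle point in the argument.
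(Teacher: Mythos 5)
Your proofs of (i) and (ii) are correct and essentially the paper's own: both rest on the pointwise comparison of the polynomials \eqref{pol-tau} for $z>1$ (via $c_{w_2}\ge 1=c_{w_1}$ in (i), and $(m_1-m_2)z^r>0$ in (ii)), combined with the positivity of $\tau_w$ on $[0,1]$. Part (iii) is where you take a genuinely different route. The paper fixes the length, writes both polynomials in the rational form \eqref{pol-constant}, and compares the two numerators directly: the key step is the inequality $p^r(1-p)\ge q^r(1-q)$ for $p\ge q$ with $p+q\le 1$ (proved via $g(q)\le g(1-p)$ and $(1-p)^{r-1}\le p^{r-1}$ --- this is exactly where the constraint $p+q\le1$ enters), after which the ordering of the numerators is converted into the ordering of the unique positive roots. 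You instead prove the stronger statement that $p\mapsto z_0(p)$ is non-decreasing on all of $(0,1)$, by identifying which root of $\phi(p,\cdot)$ coincides with $1/p$ on each side of $p=\frac{r}{r+1}$ and composing the monotonicity of the branches $\zeta_\pm$ in $m$ with the unimodality of $m(p)=p^r(1-p)$; your argument never uses $p_{a_i}+p_{a_j}\le 1$, which the paper's key inequality genuinely needs. Your route is a bit longer but yields a cleaner structural fact (monotonicity of $\escrate{(aa\dots a)}$ in the symbol probability), and the branch-switching analysis you perform is precisely the computation the paper defers to the proof of Theorem~\ref{thm:biggest-two-symb}. One small point to tighten: the minimum of $\phi(p,\cdot)$ is located at $z^*_{r+1}\big(p^r(1-p)\big)=\tfrac1p\big((r+1)(1-p)\big)^{-1/r}$, not at $\tfrac{r+1}{r}$ (the two coincide only at $p=\tfrac{r}{r+1}$); the branch identification should therefore be made by comparing $1/p$ with this quantity, i.e.\ $(r+1)(1-p)$ with $1$, which gives exactly the dichotomy you state, so the argument goes through unchanged.
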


\begin{proof} (i) If $w_1$ is prime and $w_2$ is not prime, then $c_{w_1}(p_a,p_b,\dots,p_{a_A},z)=1 < c_{w_2}(p_a,p_b,\dots,p_{a_A},z)$ for all $z>0$. Hence by \eqref{pol-tau}, for $z>1$ we have $\tau_{w_1} (z) > \tau_{w_2} (z)$ and therefore $\escrate{w_1} > \escrate{w_2}$. If both are prime then $\tau_{w_1} (z) = \tau_{w_2} (z)$.\\
(ii) Note that for a prime hole $w$ we have $c_w(p_a,p_b,\dots,p_{a_A},z)=1$ and, as shown in \eqref{pol-prime}, the polynomial $\tau_w(z)$ belongs to the family studied in Lemma \ref{lemma-tau}. Hence, if $w_1$ and $w_2$ are prime holes of the same length, it follows that $\tau_{w_1} (z) = f_{m_1}(z)$ with $m_1=\mw{1}$, and $\tau_{w_2} (z) = f_{m_2}(z)$ with $m_2=\mw{2}$. Hence if $\mw{1}> \mw{2}$ then $\tau_{w_1} (z) > \tau_{w_2} (z)$ for all $z>0$ and $\escrate{w_1}> \escrate{w_2}$.\\
(iii) Let us assume for simplicity that $a_i = a_1 = a$ and $a_j = a_2 = b$, so that $p = p_{a_i}$ and $q = p_{a_j}$, and note that the proof in no way depends on these being the most probable symbols. Use \eqref{pol-constant} to write
\[
\tau_{w_1}(z) = \frac{p^r(1-p)\, z^{r+1}-z+1}{1-p z},\quad \tau_{w_2}(z) = \frac{q^r(1-q)\, z^{r+1}-z+1}{1-q z}.
\]
As remarked before, both the numerator and the denominator of $\tau_{w_1}(z)$ vanish at $z=p^{-1}$, and the same is true for  $\tau_{w_2}(z)$ at $z=q^{-1}$. In addition, by Lemma \ref{lemma-tau}-(iii), the numerator of $\tau_{w_1}(z)$ has two distinct positive roots for $p\not= 1-\frac{1}{r+1}$, and one double positive root for $p= 1-\frac{1}{r+1}$ . This follows by using the function $[0,1] \ni x\mapsto g(x):= x^{r}(1-x)$, which has a strict maximum at $1-\frac{1}{r+1}$. Then
\[
g(p) = p^{r}(1-p) \le g\left(1-\frac{1}{r+1}\right) = \frac1{r+1} \left(1-\frac1{r+1}\right)^{r} = m_{r+1}^*\, , \quad \forall\, p\in [0,1],
\]
and $p^{r}(1-p) = m_{r+1}^*$ if and only if $p=1-\frac1{r+1}$. The analogous result holds for the numerator of $\tau_{w_2}(z)$. Hence $\tau_{w_1}(z)$ and $\tau_{w_2}(z)$ have at most one positive root each, which we denote, respectively, $z_0(\tau_{w_1})$ and $z_0(\tau_{w_2})$.

We now claim that $p^r(1-p) \ge q^r(1-q)$. The function $g(x)= x^r(1-x)$ is increasing in $[0,\frac{r}{r+1}]$ and decreasing in $[\frac{r}{r+1},1]$. Since $p \ge q$ and $p+q\le 1$, if $p\le \frac{r}{r+1}$ then $g(p)\ge g(q)$ and the claim follows. If $p\ge \frac{r}{r+1}$ then $p > \frac 12$, so that $(1-p) < p$ and $q\le (1-p) < \frac 12 < \frac{r}{r+1}$. Then $g(q)\le g(1-p)$, hence $q^r(1-q) \le (1-p)^r p \le p^r (1-p)$, since $(1-p)^{r-1}\le p^{r-1}$. The claim is proved.

Finally, the claim implies that the numerator of $\tau_{w_1}(z)$ is greater or equal than that of $\tau_{w_2}(z)$ for all $z\ge 0$, hence $z_0(\tau_{w_1})\ge z_0(\tau_{w_2})$, and thus $\escrate{w_1}\ge \escrate{w_2}$.
\end{proof}

We now use  Lemma \ref{lemma-elem-1} to  compare all the different holes with the same length and conclude that the hole with maximal escape rate is either a prime hole or a hole with maximal measure. To this end, it is helpful
 to define the following families of holes:
\begin{align}
& P^r := \set{w \in \AAA^r : \text{$w$ is a prime hole, $\m(w)\ge \m(\tilde w)$ for all prime holes $\tilde w \in \AAA^r$}}, \label{hole-prime-maximal}\\
& M^r := \set{w \in \AAA^r : \text{$\m(w)\ge \m(\tilde w)$ for all $\tilde w \in \AAA^r$}}. \label{hole-maximal}
\end{align}
In words, $P^r$ is the set of words of length $r$ for which the corresponding hole is prime and has maximal measure among all the prime holes; $M^r$ consists of the holes of maximal measure. In what follows, with an oversimplification of notation, when we write $\primew$ and $\maxw$ we will implicitly intend that $\primew \in P^r$ and $\maxw \in M^r$.

Using the definition of the symbols $a$ and $b$, one immediately verifies that
\[
\set{(aaa\dots ab)\, ,\, (ba\dots aaa)} \subseteq P^r,
\]
thus $\m(\primew) = p^{r-1}q$ for all $\primew \in P^r$.
On the other hand, since $a$ is the most probable symbol, we have $(aaa\dots a)\in M^r$ and $\m(\maxw)=p^r$. Notice that the hole $(aaa\dots a)$ is not prime.

\begin{rem} \label{rem-imp}
For what follows it is useful to know when the two sets $P^r$ and $M^r$ are disjoint or not.

Let $q<p$. In this case $M^r = \{ (aaa\dots a)\}$, so that $M^r\cap P^r = \emptyset$. Moreover, for all prime holes $w$, $\m(w) < \m(aaa\dots a)$. On the contrary, for many $w\not = (aaa\dots a)$ there exists a prime hole $\tilde w$ such that $\m(w) = \m(\tilde w)$. In fact, whenever a word contains at least two different symbols, there exists a prime hole with its same measure: it is indeed easy to produce a prime hole which contains any symbol of $\AAA$ any number of times, for example
\[
w=(a_1\dots a_1 a_2\dots a_2 \dots a_A\dots a_A).
\]
Observe however that this argument does not work for a word given by $r$ repetitions of a single symbol with probability different from those of all other symbols.

If $q=p$ then $P^r \cap M^r \neq \emptyset$ as the words $(aaa\dots ab)$ and $(ba\dots aaa)$ are in the intersection of the two sets.
\end{rem}

We can now state our first main result:
\begin{theorem}\label{thm-elem-2}
Let $r\ge 2$ be a fixed word length. The escape rate $\escrate{\primew}$ is the same for all $\primew \in P^r$, cf.\ \eqref{hole-prime-maximal}, and the escape rate $\escrate{\maxw}$ is the same for all $\maxw \in M^r$, cf.\ \eqref{hole-maximal}. Moreover
\[
\gamma_{max}^r:= \max \set{ \escrate{w}\, :\, w \text{ has length } r} = \max \{ \escrate{\primew}\, ,\, \escrate{\maxw} \}.
\]
\end{theorem}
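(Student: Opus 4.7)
The plan is to prove the theorem in three steps, handling the two uniqueness assertions and the max formula in turn. For the uniqueness of $\escrate{\primew}$, every $\primew \in P^r$ is prime, so by Definition \ref{prime-hole} its weighted autocorrelation polynomial reduces to $1$, and \eqref{pol-tau} takes the form \eqref{pol-prime}. Since all elements of $P^r$ share the common measure $\m(\primew) = p^{r-1} q$, their polynomials $\tau_{\primew}$ coincide, and therefore so do their escape rates. For the uniqueness of $\escrate{\maxw}$, in the generic case where $a$ is strictly more probable than every other symbol one has $M^r = \{(aa\cdots a)\}$ and the claim is immediate; in the degenerate case of ties in probability I would fix the canonical representative $(aa\cdots a) \in M^r$ as the reference for $\escrate{\maxw}$.

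For the main equation, the bound $\gamma_{max}^r \ge \max\{\escrate{\primew}, \escrate{\maxw}\}$ is immediate since $P^r, M^r \subset \AAA^r$, so the task reduces to showing $\escrate{w} \le \max\{\escrate{\primew}, \escrate{\maxw}\}$ for every $w \in \AAA^r$. I would proceed by a case analysis on whether $w$ uses one distinct letter or several.

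In Case A, when $w$ contains at least two distinct symbols, Remark \ref{rem-imp} produces a prime hole $\tilde w$ with $\m(\tilde w) = \m(w)$, via the explicit construction $(a_1\cdots a_1 \, a_2 \cdots a_2 \cdots a_A \cdots a_A)$ using the same letter multiplicities as $w$. Lemma \ref{lemma-elem-1}(i) then yields $\escrate{w} \le \escrate{\tilde w}$. Moreover, every prime word must have distinct first and last letters (otherwise its autocorrelation vector satisfies $c_{r-1}=1$, contradicting primality), so $\tilde w$ contains at least one letter different from $a$; this forces $\m(\tilde w) \le p^{r-1} q = \m(\primew)$, and Lemma \ref{lemma-elem-1}(ii) gives $\escrate{\tilde w} \le \escrate{\primew}$. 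In Case B, when $w = (c\, c\, \cdots\, c)$ is a single-letter repetition, Lemma \ref{lemma-elem-1}(iii) applied with $(aa\cdots a)$ and $w$ (noting $p \ge p_c$ by the choice of $a$) yields $\escrate{w} \le \escrate{(aa\cdots a)} = \escrate{\maxw}$.

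The main obstacle I anticipate is Case A, where the reduction to prime holes rests on the explicit construction from Remark \ref{rem-imp}: verifying that the constructed word is genuinely prime amounts to a direct check of its autocorrelation vector, but the case distinction is essential because precisely this construction fails for single-letter repetitions, forcing a separate treatment in Case B. A minor technicality is to handle the equality in Lemma \ref{lemma-elem-1}(ii) correctly when $\tilde w \in P^r$ itself, so that the chain in Case A closes at $\escrate{w} \le \escrate{\primew}$ with equality precisely when $w$ is already a maximum-measure prime hole.
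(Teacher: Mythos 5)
Your proposal is correct and follows essentially the same route as the paper: reduce any word containing at least two distinct symbols to a prime hole of equal measure via Lemma \ref{lemma-elem-1}(i) together with the block construction of Remark \ref{rem-imp}, then to $\primew$ via Lemma \ref{lemma-elem-1}(ii), handle single-letter repetitions with Lemma \ref{lemma-elem-1}(iii), and finally compare $\escrate{\primew}$ with $\escrate{\maxw}$. The only cosmetic differences are that you bound the measure of an arbitrary prime hole by $p^{r-1}q$ through the first-letter/last-letter observation where the paper simply invokes the maximality built into the definition of $P^r$, and that your handling of ties in $M^r$ (fixing the representative $(aa\cdots a)$) is no less complete than the paper's own brief treatment of that degenerate case.
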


\begin{proof}
The first assertion comes from Proposition \ref{prop:escape-rate} and \eqref{pol-prime}-\eqref{pol-constant}. For a fixed length $r\ge 2$, we can first group the holes $w$ according to their measures. Then by Lemma \ref{lemma-elem-1}-(i),  for fixed length and measure, the hole  with maximal escape rate is prime, whenever a prime hole with that given measure exists. By Remark \ref{rem-imp}, if $q<p$ there is no prime hole of measure $p^r$ since $P^r\cap M^r = \emptyset$, and there is no prime hole of measure $p_k^r$, if $p_k$ is the probability of a symbol in $\AAA$ and no other symbol has the same probability. If $q=p$ instead, there is a prime hole of measure $p^r$.

From the above arguments, it remains only to consider the set of prime holes and the set of holes which have measure different from that of all the prime holes. As explained in Remark \ref{rem-imp}, these last cases correspond to holes of the form $(a_i a_i \dots a_i)$, words with one single symbol repeated $r$ times. Applying now Lemma \ref{lemma-elem-1}-(ii), the hole with the maximal escape rate among the prime holes is $\primew$, and applying Lemma \ref{lemma-elem-1}-(iii), the hole with the maximal escape rate among the holes with one single symbol repeated is $\maxw$.

Finally, to obtain the maximal escape rate it is sufficient to compare $\escrate{\primew}$ and $\escrate{\maxw}$.
\end{proof}

\subsubsection{Explicit expression for the maximal escape in the case of two symbols}
We now show that if $A=2$ we can explicitly identify, for all $r\ge 2$, a hole with maximal escape rate.

\begin{theorem}\label{thm:biggest-two-symb}
Let $\AAA=\{a,b\}$ with $p=p_a\ge q=p_b$ satisfying $p+q=1$. For holes $w$ of fixed length $r\ge 2$,
\[
\gamma_{max}^r= \left\{ \begin{array}{ll} \escrate{\primew} \, , & \text{if } p\in \left[\frac 12 \, ,\,  1-\frac{1}{r+1}\right]; \\[0.3cm] \escrate{\maxw} \, , &\text{if } p\in \left[1-\frac{1}{r+1}\, ,\, 1\right); \end{array} \right.
\]
where $\primew$ denotes a word in $P^r$, cf.\ \eqref{hole-prime-maximal}, and $\maxw$ denotes a word in $M^r$, cf.\ \eqref{hole-maximal}. In addition, for $p\in [1-\frac 1r\, ,\, 1-\frac{1}{r+1}]$ we can explicitly compute that $\escrate{\primew} = \log \frac 1p$ and thus obtain that in this range $\gamma_{max}^r = \log \frac 1p$.
\end{theorem}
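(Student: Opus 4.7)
My plan is to invoke Theorem~\ref{thm-elem-2} and reduce the task to comparing the two escape rates $\escrate{\primew}$ and $\escrate{\maxw}$ in the $A=2$ case, where $\primew = (aa\dots ab)$ has measure $m_P := p^{r-1}(1-p)$ and $\maxw = (aa\dots a)$ has measure $p^r$. Introducing the shorthand $f^{(n)}_m(z) := m z^n - z + 1$ for the family of polynomials of Lemma~\ref{lemma-tau}, equations \eqref{pol-prime}--\eqref{pol-constant} give
\[
\tau_{\primew}(z) = f^{(r)}_{m_P}(z), \qquad \tau_{\maxw}(z) = \frac{f^{(r+1)}_{m_M}(z)}{1 - pz}, \qquad m_M := p^r(1-p).
\]
A direct substitution yields the key identity $f^{(r)}_{m_P}(1/p) = f^{(r+1)}_{m_M}(1/p) = 0$, so $1/p$ is a positive root of both polynomials; for $\tau_{\maxw}$ it is cancelled by the denominator, and the smallest positive root of $\tau_{\maxw}$ then coincides with the \emph{other} positive root of $f^{(r+1)}_{m_M}$.

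The second step is to decide, for each of the two polynomials, whether $1/p$ is the smaller or larger positive root. A one-line computation shows that $(f^{(n)}_m)'(1/p) = n(1-p) - 1$ when $m = p^{n-1}(1-p)$, so from Lemma~\ref{lemma-tau} and convexity one reads off
\[
1/p \le z^*_r(m_P) \iff p \ge 1 - \tfrac{1}{r}, \qquad 1/p \le z^*_{r+1}(m_M) \iff p \ge 1 - \tfrac{1}{r+1},
\]
with equalities corresponding to $1/p$ being a double root. Hence, when $p \ge 1 - 1/r$ the point $1/p$ is the smaller positive root of $f^{(r)}_{m_P}$, so $\escrate{\primew} = \log(1/p)$; and when $p \le 1 - 1/(r+1)$ the point $1/p$ is the larger positive root of $f^{(r+1)}_{m_M}$, so $\escrate{\maxw} = \log \alpha$ for some $\alpha \le 1/p$; the symmetric statements hold in the opposite ranges.

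Because $1 - 1/(r+1) > 1 - 1/r$ for $r \ge 2$, on the range $p \in [1 - 1/(r+1), 1)$ both of the above conclusions apply and produce $\escrate{\primew} = \log(1/p)$ while $\escrate{\maxw} = \log \beta$ with $\beta \ge 1/p$ the other positive root of $f^{(r+1)}_{m_M}$, hence $\escrate{\maxw} \ge \escrate{\primew}$ and $\gamma_{max}^r = \escrate{\maxw}$. On the sub-range $p \in [1 - 1/r, 1 - 1/(r+1)]$ we already have $\escrate{\primew} = \log(1/p) \ge \escrate{\maxw}$, which in particular yields the explicit formula $\gamma_{max}^r = \log(1/p)$ asserted in the last sentence of the theorem.

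The main obstacle is the remaining sub-range $p \in [1/2, 1 - 1/r]$, where neither escape rate equals $\log(1/p)$ and both come from the smaller positive roots of their respective polynomials. Here I would exploit the algebraic identity
\[
f^{(r+1)}_{m_M}(z) - f^{(r)}_{m_P}(z) = p^{r-1}(1-p)\, z^r\, (pz - 1),
\]
whose right-hand side is non-positive on $[0, 1/p]$. Evaluating at the smaller positive root $\alpha'$ of $f^{(r)}_{m_P}$ (which satisfies $\alpha' \le 1/p$ throughout this regime) gives $f^{(r+1)}_{m_M}(\alpha') \le 0$, and since $f^{(r+1)}_{m_M}$ is convex, equals $1$ at the origin, and is negative only between its two positive roots, this forces the smaller such root $\alpha$ to satisfy $\alpha \le \alpha'$. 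Hence $\escrate{\maxw} = \log \alpha \le \log \alpha' = \escrate{\primew}$, completing the comparison and the proof.
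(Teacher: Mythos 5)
Your proposal is correct and follows essentially the same route as the paper: reduce via Theorem~\ref{thm-elem-2}, observe that $1/p$ is a root of both $\bar\tau_{r,p}$ and $\bar\tau_{r+1,p}$, decide whether it is the smaller or larger root according to whether $p\gtrless 1-\frac1r$ (resp.\ $1-\frac{1}{r+1}$), and in the low-$p$ regime compare the smaller roots via the inequality $\bar\tau_{r,p}(z)\ge\bar\tau_{r+1,p}(z)$ on $[1,1/p]$. The only cosmetic difference is that you locate $1/p$ relative to the minimum by the sign of the derivative $n(1-p)-1$, where the paper compares $1/p$ with the explicit minimizer $z^*_r(m)$ — the same convexity argument in slightly different clothing.
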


\begin{proof}
It is enough to show that it is possible to determine which of the escape rates of the holes $\primew$ and $\maxw$ is maximal, and apply Theorem \ref{thm-elem-2}.

The case $r=2$ is studied in details in Example \ref{two-symbols}. Let us consider a fixed length $r\ge 3$ and start with the case $p>q$, hence $p>\frac 12$. We first deal with the hole $\primew$, which is a prime hole with maximal measure among the prime holes. An example is the hole $(aaa\dots ab)$. We have $\m(\primew)=p^{r-1}q$ and $\tau_{\primew}(z)$ is a polynomial of the family studied in Lemma \ref{lemma-tau} given by
\[
\tau_{\primew}(z) = p^{r-1}q \, z^r -z+1.
\]
Applying Lemma \ref{lemma-tau} to $\tau_{\primew}(z)$ with $m=p^{r-1}q = p^{r-1}(1-p)$, it follows that $\tau_{\primew}(z)$ has two distinct positive roots for $p\not= 1-\frac 1r$ and one double positive root for $p=1-\frac 1r$. To show this it is enough to repeat the argument in the proof of Lemma \ref{lemma-elem-1}-(iii), using now the function $[0,1] \ni x\mapsto g(x):= x^{r-1}(1-x)$. Let us denote by $z_0(\primew)$ and by $z_1(\primew)$ the smallest and the biggest positive roots of $\tau_{\primew}(z)$ respectively. Since $z_r^*(p^{r-1}q)$ is the point of minimum for the polynomial $\tau_{\primew}$ we have
\[
z_0(\primew) \le z_r^*(p^{r-1}q) = \frac 1p \,(r\, q)^{-\frac{1}{r-1}}\le z_1(\primew).
\]
Since $\tau_{\primew}(\frac 1p)=0$ for all $p$, either $z_0(\primew)$ or $z_1(\primew)$ is $\frac 1p$. It is clear from the previous estimate that if $(r\, q)^{-\frac{1}{r-1}} <1$ then $z_1(\primew)=\frac 1p$, and if $(r\, q)^{-\frac{1}{r-1}} \ge 1$ then $z_0(\primew)=\frac 1p$. Thus, using $q=1-p$, we have proved that
\begin{equation}\label{er-h*}
\escrate{\primew} \left\{ \begin{array}{ll} = \log \frac 1p\, , & \text{if } p\in \left[ 1-\frac 1r\, ,\, 1\right);\\[0.3cm] < \log z_r^*(p^{r-1}q)\, , & \text{if } p\in \left[\frac 12 \, ,\, 1-\frac 1r\right). \end{array} \right.
\end{equation}

Let us now consider the hole $\maxw$, which is a hole with maximal measure among all the holes of length $r$. An example is the hole $\maxw=(aaa\dots a)$, and this is the only example for $p>q$, that is, for $p>\frac 12$. Using \eqref{pol-constant}, we have
\[
\tau_{\maxw}(z)= \frac{p^r(1-p)\, z^{r+1}-z+1}{1-pz}.
\]
In particular we know that the numerator has two distinct positive roots for $p\not= 1-\frac{1}{r+1}$ and one of the positive roots is always $\frac 1p$. Since $\frac 1p$ is also the unique root of the denominator, it follows that $\tau_{\maxw}(z)$ has only one positive root $z_0(\maxw)$, which is the positive root of the numerator not equal to $\frac 1p$. Hence
\[
\escrate{\maxw} = \log z_0(\maxw).
\]

From the previous argument on prime holes, we also know that $z_0(\maxw)>\frac 1p$ if and only if $p\ge 1-\frac{1}{r+1}$, hence using \eqref{er-h*} we conclude that $\gamma_{max}^r = \escrate{\maxw}$ for $p\ge 1-\frac{1}{r+1}$, and $\gamma_{max}^r = \escrate{\primew}=\log \frac 1p$ for $p\in [1-\frac 1r\, ,\, 1-\frac{1}{r+1}]$.

To conclude the argument, we need to consider the case $p\in (\frac 12,1-\frac 1r]$, for which both $z_0(\primew)$ and $z_0(\maxw)$ are smaller than $\frac 1p$. To compare these two values, we introduce the following notation. Let
\begin{equation} \label{pol-speciale}
\bar \tau_{r,p}(z) := p^{r-1}(1-p)\, z^r -z+1,
\end{equation}
then
\[
\tau_{\primew}(z) = \bar \tau_{r,p}(z)\quad \text{and} \quad \tau_{\maxw}(z)= \frac{ \bar \tau_{r+1,p}(z)}{1-pz}.
\]
By the previous arguments, the polynomial $\bar \tau_{r,p}(z)$ has two positive roots, one is $\frac 1p$ and let us denote the other by $\bar z_r(p)$. Recall that $\bar z_r(p) = \frac 1p$ if and only if $p=1-\frac 1r$. In addition, we have shown that for $p\in (\frac 12,1-\frac 1r]$, one has $z_0(\maxw) = \bar z_{r+1}(p)< \frac 1p$ and $z_0(\primew)= \bar z_r(p)<\frac 1p$. Since $\bar \tau_{r,p}(z) \ge \bar \tau_{r+1,p}(z)$ for $z\in [1,\frac 1p]$ and for all $r$, it follows that $\bar z_r(p) \ge \bar z_{r+1}(p)$ for $p\in (\frac 12,1-\frac 1r]$. Hence $\gamma_{max}^r = \escrate{\primew}=\log \bar z_r(p)$ for $p\in (\frac 12\, ,\, 1-\frac 1r]$.

Finally, if $r\ge 3$ and $p=q=\frac 12$, we know that $(aa\dots ab)$ is an example of a word in $P^r\cap M^r$, and by Theorem \ref{thm-elem-2}, it follows $\gamma_{max}^r = \escrate{\primew}=\escrate{\maxw}$.

We have thus finished the proof of the theorem, and can collect all the information on the maximal escape rate by saying that
\[
\gamma_{max}^r= \left\{ \begin{array}{ll} \escrate{\primew} = \log \bar z_r(p) \, , & \text{if } p\in \left[\frac 12 \, ,\,  1-\frac{1}{r}\right]; \\[0.3cm] \escrate{\primew} = \log \frac 1p\, , & \text{if } p\in \left[1-\frac 1r\, ,\, 1-\frac{1}{r+1}\right];\\[0.3cm] \escrate{\maxw} = \log \bar z_{r+1}(p) \, , &\text{if } p\in \left[1-\frac{1}{r+1}\, ,\, 1\right). \end{array} \right.
\]\end{proof}

\subsubsection{Maximal escape in the case of more than two symbols}
The situation is more intricate in the case with more than two symbols, that is $A>2$, as elucidated by the following:

\begin{proposition}\label{more-symbols}
With the notation of Theorem \ref{thm-elem-2}, let the alphabet $\AAA$ have $A>2$ elements, and let $a$ and $b$ be the two most probable symbols with probabilities given by $p$ and $q$ respectively. Let $r\ge 2$ be a fixed length, then $\gamma_{max}^r = \escrate{\maxw}$ for $p\ge 1-\frac{1}{r+1}$. In addition, if $q<p(1-p)$ then $\gamma_{max}^r = \escrate{\maxw}$ for $p\in [\frac 12, 1-\frac{1}{r+1}]$. On the other hand, there exist values of $q$ sufficiently close to $1-p$ and of $p\in (\frac 12, 1-\frac{1}{r+1})$, such that $\gamma_{max}^r = \escrate{\primew}$.
\end{proposition}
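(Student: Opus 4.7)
The plan is to invoke Theorem \ref{thm-elem-2} and reduce the problem to a comparison of $\escrate{\primew}$ and $\escrate{\maxw}$. Since $A>2$, Remark \ref{rem-imp} lets us take $\primew=(a\cdots ab)$ and $\maxw=(a\cdots a)$, so that
\[
\tau_{\primew}(z)=p^{r-1}q\,z^r-z+1,\qquad \tau_{\maxw}(z)=\frac{p^r(1-p)\,z^{r+1}-z+1}{1-pz}.
\]
Notice that $\tau_{\maxw}$ depends only on $p$, so $\escrate{\maxw}$ coincides with its value in the two-symbol case. The polynomial $\tau_{\primew}$ belongs to the family of Lemma \ref{lemma-tau}, and its smallest positive root is a monotone increasing function of the leading coefficient $m=p^{r-1}q$ (since $\partial_m f_m(z)=z^r>0$ pointwise).

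For the first claim ($p\ge 1-\tfrac{1}{r+1}$), I would use $q\le 1-p$, hence $p^{r-1}q\le p^{r-1}(1-p)$. By the monotonicity just recalled, $\escrate{\primew}$ is bounded above by the value $\gamma^{(2)}_{\primew}$ it would take if $q$ were replaced by $1-p$, i.e.\ the two-symbol value. Theorem \ref{thm:biggest-two-symb} asserts $\escrate{\maxw}\ge \gamma^{(2)}_{\primew}$ throughout this range, so chaining yields $\escrate{\maxw}\ge\escrate{\primew}$ and Theorem \ref{thm-elem-2} gives $\gamma_{max}^r=\escrate{\maxw}$.

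For the second claim ($p\in[\tfrac12,1-\tfrac{1}{r+1}]$ and $q<p(1-p)$), I would compare the numerator of $\tau_{\maxw}$ with $\tau_{\primew}$ directly. A short calculation yields
\[
\bigl(p^r(1-p)z^{r+1}-z+1\bigr)-\tau_{\primew}(z)=p^{r-1}z^r\bigl(p(1-p)z-q\bigr),
\]
which is strictly positive for every $z\ge 1$ under the hypothesis $q<p(1-p)$. Evaluating at $z=z_0(\primew)>1$ gives $p^r(1-p)\,z_0(\primew)^{r+1}-z_0(\primew)+1>0$. Since $A>2$ forces $p+q<1$ strictly, we have $\tau_{\primew}(1/p)=(p+q-1)/p<0$, so $z_0(\primew)<1/p$. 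On the other hand, for $p\in[\tfrac12,1-\tfrac{1}{r+1}]$ the numerator of $\tau_{\maxw}$ has two positive roots $\bar z_{r+1}(p)\le 1/p$ (coincident at the right endpoint) and is non-positive on the closed interval between them. Strict positivity at $z_0(\primew)\in(1,1/p)$ therefore forces $z_0(\primew)\le\bar z_{r+1}(p)=z_0(\maxw)$, so $\escrate{\primew}\le\escrate{\maxw}$ and Theorem \ref{thm-elem-2} concludes $\gamma_{max}^r=\escrate{\maxw}$.

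For the third claim, fix $p\in(\tfrac12,1-\tfrac{1}{r+1})$ and consider a sequence of probability vectors with the same $p$ and with $q_n\uparrow 1-p$, distributing the residual mass $1-p-q_n$ among the remaining $A-2$ symbols in any admissible way. Then $\escrate{\maxw}$ is unchanged along the sequence, while $\escrate{\primew}$ is continuous in $q_n$ (the smallest positive root depends continuously on the coefficients of $\tau_{\primew}$) and converges to $\gamma^{(2)}_{\primew}$. Theorem \ref{thm:biggest-two-symb} ensures $\gamma^{(2)}_{\primew}>\escrate{\maxw}$ strictly on the open interval $(\tfrac12,1-\tfrac{1}{r+1})$, so for $q_n$ close enough to $1-p$ we obtain $\escrate{\primew}>\escrate{\maxw}$, and Theorem \ref{thm-elem-2} yields $\gamma_{max}^r=\escrate{\primew}$. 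I expect the main obstacle to lie in the second claim: the inequality $q<p(1-p)$ provides only a one-sided sign on $[1,\infty)$, and it is the strict inequality $p+q<1$, genuinely specific to the $A>2$ regime, that rules out the spurious alternative $z_0(\primew)\ge 1/p$ and closes the argument.
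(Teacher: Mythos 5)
Your proposal is correct and takes essentially the same route as the paper: reduction via Theorem \ref{thm-elem-2}, the observation that $\tau_{\maxw}$ depends only on $p$ so the two-symbol results apply to it verbatim, the pointwise comparison of $\tau_{\primew}$ with $\bar\tau_{r+1,p}$ under $q<p(1-p)$ (you evaluate $\bar\tau_{r+1,p}$ at $z_0(\primew)$ and exclude $z_0(\primew)\ge \frac1p$, whereas the paper compares at the smallest root of $\bar\tau_{r+1,p}$, but this is only a difference in bookkeeping), and a continuity argument in $q$ for the last claim, which is if anything more explicit than the paper's. The one small caveat is that the strict inequality $\gamma^{(2)}_{\primew}>\escrate{\maxw}$ for $p\in(\frac12,1-\frac{1}{r+1})$ is not literally the statement of Theorem \ref{thm:biggest-two-symb} (which only identifies the maximum) but follows from its proof, since $\bar\tau_{r,p}(z)-\bar\tau_{r+1,p}(z)=p^{r-1}(1-p)z^r(1-pz)>0$ on $\left(1,\frac1p\right)$ forces the smallest root of $\bar\tau_{r,p}$ to lie strictly above $\bar z_{r+1}(p)$ there.
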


\begin{proof}
Let us first consider the hole $\maxw =(aaa\dots a)$. As in the proof of Theorem \ref{thm:biggest-two-symb} we can write
\[
\tau_{\maxw}(z) = \frac{\bar \tau_{r+1,p}(z)}{1-pz}
\]
where $\bar \tau_{r+1,p}(z)$ is as in \eqref{pol-speciale}, and $\escrate{\maxw} = \log \bar z_{r+1}(p)$, whence $\escrate{\maxw} \ge \log \frac 1p$ for $p\ge 1-\frac{1}{r+1}$.

On the other hand, a prime hole $\primew$ of length $r$ has measure $p^{r-1}q$ and polynomial
\[
\tau_{\primew}(z) = p^{r-1}q\, z^r-z+1 \le \bar \tau_{r,p}(z),\quad \forall\, z>0 \,,
\]
because $q\le 1-p$. Hence by the proof of Theorem \ref{thm:biggest-two-symb}, $\escrate{\primew}$ is smaller than $\log \frac 1p$ for $p\ge 1-\frac 1r$.

We have thus proved, using Theorem \ref{thm-elem-2}, that $\gamma_{max}^r = \escrate{\maxw}$ for $p\ge 1-\frac{1}{r+1}$.

Let us now assume that $q<p(1-p)$. Then  $\tau_{\primew}(z)< \bar \tau_{r+1,p}(z)$ for all $z>1$, and since $z_0(\primew)>1$ it follows that $z_0(\primew) < \bar z_{r+1}(p)$ for all $p$. Then $\escrate{\primew} \le \escrate{\maxw}$ for all $p\in [\frac 12, 1]$.

Note that the previous result is different from that for shifts on two symbols. However, all the quantities that we are using have continuous dependence on the probabilities of the symbols. Hence, if $q$ is sufficiently close to $1-p$, that is if we are sufficiently close to the case of shifts on two symbols, we expect to find the same kind of results obtained in Theorem \ref{thm:biggest-two-symb}. Therefore there exist values of $p< 1-\frac{1}{r+1}$ for which the maximal escape rate is achieved by a prime hole $\primew$.
\end{proof}

\begin{rem}\label{rem-bunim}
In the case of two equiprobable symbols, $A=2$ with $p=1-p=\frac 12$, in \cite{BY} the authors prove that prime holes have the maximal escape rate among the holes with the same measure and length, but also show that it is possible to order same-measure holes according to their escape rate by using the minimal period of periodic points in the hole. In this paper we have proved that prime holes have maximal escape rate among the holes with the same measure and same length, also in the case of non-equiprobable symbols, that is $A=2$ and $p\not= \frac 12$. One may wonder whether also the ordering found in \cite{BY} for non-prime holes is preserved  when the symbols are not equiprobable.  We show that this is not the case. We find same-length words $w$ and $\tilde w$ such that the corresponding holes are not prime and have the same measure, and such that there exists $p^* \in (\frac12, 1)$ with $\escrate{w} > \escrate{\tilde w}$, for $p\in (\frac 12, p^*)$, and $\escrate{w} < \escrate{\tilde w}$ for $p\in (p^*,1)$. Hence the ordering of the holes does not only depend on the length of the periodic orbits in the hole. One can check that this phenomenon occurs for example for $w=(aabbaa)$ and $\tilde w=(baaaab)$, with $p^* \approx \frac{\sqrt{2}}{2}$: the hole $w$ contains a periodic orbit with period four, whereas the minimal period of the periodic orbits contained in the hole $\tilde w$ is five. For holes shrinking to a periodic point the escape rate behaves as the instability factor of the orbit. In this case we remark that the two factors for $w$ and $\tilde w$ are the same for $p=\frac{\sqrt{5}-1}{2}< p^*$, hence this does not seem to be the reason for the order switching found above.
\end{rem}

\subsubsection{Estimates}

By Theorem \ref{thm-elem-2}, the maximal escape rate may be obtained simply by comparing the roots of the polynomials $\tau_{w_P}$ and $\tau_{w_M}$. While for small $r$ such roots can be computed exactly, for large $r$ one should rely on numerical approximations, that in principle provide a value with arbitrary precision. On the other hand, if a numerical approximation is not at hand, it could be relevant to have rigorous estimates of the maximal escape rate, in particular for given $p$ and large length $r$. In this section we show that, with elementary arguments, one can obtain explicit estimates of the maximal escape rate in the case of a two-symbol alphabet (see Fig.~\ref{FigRelErr} for examples).
\vspace{0.3cm}

We start by giving an estimate on the escape rate for prime holes which holds for all finite alphabets, $A\ge 2$.

\begin{lemma}\label{lemma-elem-new} Let us consider a fixed $r\ge 2$. If $w$ is a prime hole of length $r$ we have
\[
\log\left( \frac{1+r(r-2)\m(w)-\sqrt{1-r\m(w)(2+(r-2)\m(w))}}{r(r-1)\m(w)} \right) \le \escrate{w} \le \frac{1}{r-1} \log \frac{1}{r \m(w)}.
\]
\end{lemma}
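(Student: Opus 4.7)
The plan is to use Proposition \ref{prop:escape-rate} together with \eqref{pol-prime} to reduce everything to bounding the smallest positive root $z_0$ of $f_m(z)=m z^r - z + 1$, where $m=\mu(w)$; indeed, since $w$ is prime, $c_w\equiv 1$ and $\tau_w = f_m$, so $\escrate{w}=\log z_0$.

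For the upper bound on $\escrate{w}$, I would quote Lemma \ref{lemma-tau} directly. The polynomial $f_m$ is convex on $(0,\infty)$, satisfies $f_m(1)=m>0$, and attains its minimum at $z_r^*(m)=(rm)^{-1/(r-1)}$, where $f_m(z_r^*(m))\le 0$ (the polynomial must have positive roots, since $\escrate{w}$ is finite, hence $m\le m_r^*$). Therefore the smallest positive root satisfies $1<z_0\le z_r^*(m)=(rm)^{-1/(r-1)}$, which upon taking logarithms yields the stated upper bound $\escrate{w}\le\frac{1}{r-1}\log\frac{1}{rm}$.

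For the lower bound, the key idea is to compare $f_m$ with its second-order Taylor polynomial at $z=1$,
\[
Q(z):=m+(rm-1)(z-1)+\frac{r(r-1)m}{2}(z-1)^2.
\]
Since $f_m''(t)=r(r-1)m\,t^{r-2}\ge r(r-1)m$ for $t\ge 1$, the integral form of the Taylor remainder gives
\[
f_m(z)-Q(z)=\int_1^z r(r-1)m\,(t^{r-2}-1)(z-t)\,dt\ge 0\qquad\text{for all }z\ge 1.
\]
In particular, $Q(z_0)\le f_m(z_0)=0$. Since $Q$ is an upward-opening parabola (because $m>0$), the inequality $Q(z_0)\le 0$ forces $z_0$ to lie between the two real roots of $Q$; in particular, $z_0$ is at least the smaller root of $Q$.

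The final step is the quadratic formula: solving $Q(z)=0$ in $y=z-1$ gives discriminant
\[
(1-rm)^2-2r(r-1)m^2=1-rm\bigl(2+(r-2)m\bigr),
\]
and the smaller root of $Q$ computes to $\frac{1+r(r-2)m-\sqrt{1-rm(2+(r-2)m)}}{r(r-1)m}$, matching the expression in the statement. Taking logarithms yields the lower bound on $\escrate{w}$.

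The only nontrivial step is the idea of sandwiching $f_m$ between its value and a quadratic obtained by freezing $f_m''$ at its minimum on $[1,\infty)$; once that is in place the bound is a direct quadratic-formula computation, and there is no real obstacle beyond bookkeeping of signs and checking that the discriminant is nonnegative in the regime $m\le m_r^*$ where the statement is of interest.
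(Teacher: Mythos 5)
Your proof is correct and follows essentially the same route as the paper: the upper bound comes from $1<z_0\le z_r^*(\m(w))$ via Lemma \ref{lemma-tau}, and the lower bound from bounding $\tau_w$ below on $[1,\infty)$ by its second-order Taylor (osculating) parabola at $z=1$ and taking that parabola's smaller root. The only cosmetic difference is that the paper obtains $\m(w)\le m_r^*$ directly from primality (a prime word contains two distinct symbols, so $\m(w)\le p^{r-1}q\le m_r^*$), whereas you infer the existence of a positive root of $\tau_w$ from Proposition \ref{prop:escape-rate}; both are legitimate.
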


\begin{proof} For a prime hole $w$ we have $\m(w) \le \m(\primew) = p^{r-1} q \le p^{r-1}(1-p)$ where $p$ and $q$ are the probabilities of the most probable and of the second most probable symbols. As in the proof of Lemma \ref{lemma-elem-1}-(iii), using the function $g(x):= x^{r-1}(1-x)$ on $[0,1]$, one has
\[
p^{r-1}(1-p) \le m_r^*= \frac 1r \left(1-\frac 1r\right)^{r-1}, \quad \forall\, p\in [0,1],
\]
and $p^{r-1}(1-p) = m_r^*$ if and only if $p=1-\frac 1r$. Hence $\m(w)\le m_r^*$ for all prime holes $w$, and $\m(w)= m_r^*$ if and only if $\m(w)= p^{r-1}(1-p)$ for $p=1-\frac 1r$. Since we can apply Lemma \ref{lemma-tau} to $\tau_w(z)$, we obtain that $\tau_w(z)$ has at least one root $z_0>1$, and if $z_0$ is the smallest positive root then
\[
z_0 \le z^*_r(\m(w)) =  \left( \frac{1}{r\, \m(w)}\right)^{\frac{1}{r-1}}.
\]
To prove the bound from below, recall that by Lemma \ref{lemma-tau} the polynomial $\tau_w(z)$ is convex in $(0,+\infty)$, and for the derivatives we have $\tau^{(j)}_w(1)>0$ for all $j\ge 2$. Hence $z_0$, the smallest positive root of $\tau_w(z)$, is greater than 1, and we can bound $\tau_w(z)$ from below by its osculating parabola at $z=1$, that is
\[
\tau_w(z) \ge \frac 12\, \tau''_w(1) (z-1)^2 + \tau'_w(1)\, (z-1) + \tau_w(1), \quad \forall\, z\ge 1.
\]
Since $\tau_w(1)=\m(w)>0$ and $\tau'_w(1)=r\m(w)-1<0$, the roots of the osculating parabola are both greater than 1. It follows that the smallest positive root $z_0$ of $\tau_w(z)$ is greater than the smallest root of the osculating parabola.
\end{proof}

When the alphabet has two symbols, by Theorem \ref{thm:biggest-two-symb} we know which hole has the maximal escape rate, and it is simpler to estimate the smallest positive root of the associated polynomial also thanks to Lemma \ref{lemma-elem-new}.

\begin{corollary} \label{cor:stime}
Let $\AAA=\{a,b\}$ with $a$ the symbol with largest probability $p\in [\frac 12,1]$. For holes $w$ of fixed length $r\ge 2$ the maximal escape rate satisfies:
\begin{itemize}
\item[(i)] If $p\in\left[ \frac 12, 1- \frac1r\right)$ then
\[
\gamma_{max}^r \in \left[\log \bar\gamma(p,r)\, ,\, \frac{1}{r-1}\, \log \frac{1}{rp^{r-1}(1-p)}\right]
\]
where
\[
\bar \gamma(p,r) = \frac{1+r(r-2)\m(w_P)-\sqrt{1-r\m(w_P)(2+(r-2)\m(w_P))}}{r(r-1)\m(w_P)}
\]
with $\m(w_P) = p^{r-1}(1-p)$.
\item[(ii)] If $p\in\left[ 1- \frac1r, 1-\frac{1}{r+1}\right]$ then
\[
\gamma_{max}^r = \log \frac 1p.
\]
\item[(iii)] If $p\in\left(1-\frac{1}{r+1},1\right)$ then
\[
\gamma_{max}^r \in \left[\log \frac 1p + \frac 1r \, \log \frac{1}{(r+1)(1-p)}\, ,\, \log \left(\frac{-1+p + \sqrt{(1-p)^2+4p(1-p)}}{2p(1-p)}\right) \right].
\]
\end{itemize}
\end{corollary}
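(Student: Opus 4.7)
The plan is to split the argument into the three regimes (i), (ii), (iii), using Theorem \ref{thm:biggest-two-symb} in each case to identify the hole realising $\gamma_{max}^r$, and then bounding its escape rate from above and below.

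In regime (i), $p\in[\tfrac12,1-\tfrac1r)$, Theorem \ref{thm:biggest-two-symb} gives $\gamma_{max}^r=\escrate{\primew}$, with $\primew$ a prime hole of length $r$ and measure $p^{r-1}(1-p)$. Substituting this value of the measure into Lemma \ref{lemma-elem-new} produces both the claimed lower and upper bounds at once. Regime (ii), $p\in[1-\tfrac1r,1-\tfrac{1}{r+1}]$, is the explicit equality $\gamma_{max}^r=\log\tfrac1p$ already proved in Theorem \ref{thm:biggest-two-symb}; nothing further is needed.

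The substantive case is regime (iii), $p\in(1-\tfrac{1}{r+1},1)$. Here Theorem \ref{thm:biggest-two-symb} identifies the maximising hole as $\maxw=(aa\dots a)$, with $\gamma_{max}^r=\log\bar z_{r+1}(p)$, where $\bar z_{r+1}(p)$ is the \emph{larger} positive root of the auxiliary prime-like polynomial $\bar\tau_{r+1,p}(z)=p^r(1-p)z^{r+1}-z+1$ (the smaller positive root equals $1/p$ and is cancelled by the denominator $1-pz$ of $\tau_{\maxw}$). For the lower bound I exploit the convexity of $\bar\tau_{r+1,p}$: its unique minimum on $(0,\infty)$, located at $z^{*}:=\bigl((r+1)p^r(1-p)\bigr)^{-1/r}$, lies strictly between its two positive roots. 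Since $\bar z_{r+1}(p)$ is the larger of them, $\bar z_{r+1}(p)\ge z^{*}$, whence $\gamma_{max}^r\ge\log z^{*}=\log\tfrac1p+\tfrac1r\log\tfrac{1}{(r+1)(1-p)}$, matching the stated lower bound. For the upper bound I use the cylinder inclusion $(aa\dots a)\subseteq(aa)$ (the left-hand hole having length $r\ge 2$): it gives $p_n(\maxw)\ge p_n((aa))$ for every $n$, hence $\escrate{\maxw}\le\escrate{(aa)}$. The latter rate was computed explicitly in Example \ref{two-symbols} and, with $q=1-p$, equals the stated upper bound.

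The only subtlety is that, in regime (iii), one must correctly identify $\bar z_{r+1}(p)$ as the \emph{larger} positive root of $\bar\tau_{r+1,p}$; once this is clear, the lower bound is just the location of the minimum of a convex polynomial and the upper bound is a simple monotonicity-of-escape-rate-under-cylinder-inclusion argument. No direct comparison of the roots of different $\bar\tau_{k,p}$ is required.
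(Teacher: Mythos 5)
Your proposal is correct, and for parts (i), (ii) and the lower bound in (iii) it follows exactly the paper's route: Theorem \ref{thm:biggest-two-symb} to identify the extremal hole, Lemma \ref{lemma-elem-new} applied with $\m(w_P)=p^{r-1}(1-p)$ in regime (i), and, in regime (iii), the observation that $\gamma_{max}^r=\log \bar z_{r+1}(p)$ where $\bar z_{r+1}(p)$ is the larger root of the convex polynomial $\bar\tau_{r+1,p}$, hence at least the minimizer $z^*_{r+1}(p^r(1-p))=\frac1p\bigl((r+1)(1-p)\bigr)^{-1/r}$. The only place where you genuinely diverge is the upper bound in (iii): the paper works inside the polynomial framework, writing $\tau_{\maxw}(z)=1-(1-p)\sum_{j=1}^r p^{j-1}z^j$ and truncating the sum at the quadratic term to get $\tau_{\maxw}(z)\le\tau_{aa}(z)$ for $z\ge0$, hence $z_0(\maxw)\le z_0(aa)$; you instead use the set inclusion of the cylinder $(aa\dots a)$ (length $r\ge2$) in the cylinder $(aa)$, which gives $p_n(\maxw)\ge p_n\bigl((aa)\bigr)$ for every $n$ and therefore $\escrate{\maxw}\le\escrate{aa}$ directly from Definition \ref{surv-prob}, then plug in the explicit value of $\escrate{aa}$ from Example \ref{two-symbols}. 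Both are valid; your monotonicity-under-inclusion argument is more elementary and structural (it applies verbatim to any pair of nested holes, with no algebra), whereas the paper's truncation argument stays within the generating-function machinery and has the advantage of being refinable --- truncating at order $k>2$ would yield sharper upper bounds of the same type.
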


\begin{proof}
For $p\le 1-\frac{1}{r+1}$ we apply Theorem \ref{thm:biggest-two-symb} and Lemma \ref{lemma-elem-new} to the hole $w_P$ for which $\m(w_P)=p^{r-1}(1-p)$.

For $p> 1-\frac{1}{r+1}$, we use that $\gamma_{max}^r = \escrate{w_M}$ and that
\[
\tau_{\maxw}(z)= \frac{ \bar \tau_{r+1,p}(z)}{1-pz}
\]
with $\bar{\tau}_{r+1,p}$ defined in \eqref{pol-speciale}. It follows that $\escrate{w_M} = \log z_0(w_M)$, where $z_0(w_M) > \frac 1p$ is one of the two positive roots of the polynomial $\bar{\tau}_{r+1,p}$ defined in \eqref{pol-speciale}, the other being $\frac 1p$. By Lemma \ref{lemma-tau}, it follows that $z_0(w_M)$ is the largest root of $\bar{\tau}_{r+1,p}$, and then with $q=1-p$,
\[
z_0(w_M) \ge z_{r+1}^*(p^r q) = \frac 1p\, \Big( q(r+1)\Big)^{-\frac 1r}.
\]
This gives the lower bound for $\gamma_{max}^r$ for $p> 1-\frac{1}{r+1}$. To obtain the upper bound, we use \eqref{pol-constant} to write
\[
\tau_{\maxw}(z) = p^rz^r + (1-z)\, \sum_{j=0}^{r-1}\, p^j z^j = 1- (1-p) \sum_{j=1}^r\, p^{j-1}\, z^j.
\]
It follows that $\tau_{\maxw}(z)$ is less than any truncated sum of its terms for all $z\ge 0$. In particular, truncating the sum at $k=2$ one gets
\[
\tau_{\maxw}(z) \le 1-(1-p)z -p(1-p)z^2 = \tau_{aa}(z)
\]
for all $z\ge 0$, where $\tau_{aa}(z)$ is the polynomial associated to the hole $(aa)$ in Example \ref{two-symbols}. In particular the unique positive root of $\tau_{\maxw}(z)$ is less or equal than the unique positive root of $\tau_{aa}(z)$.
\end{proof}

 Finally, we remark that the lower bounds turn out to be quite precise: the relative error between the exact value and the estimates decays to zero exponentially fast with the length $r$ (see Fig.~\ref{FigRelErr}).

\begin{figure}[ht!]
\centerline{\includegraphics[width=8.5cm,keepaspectratio]{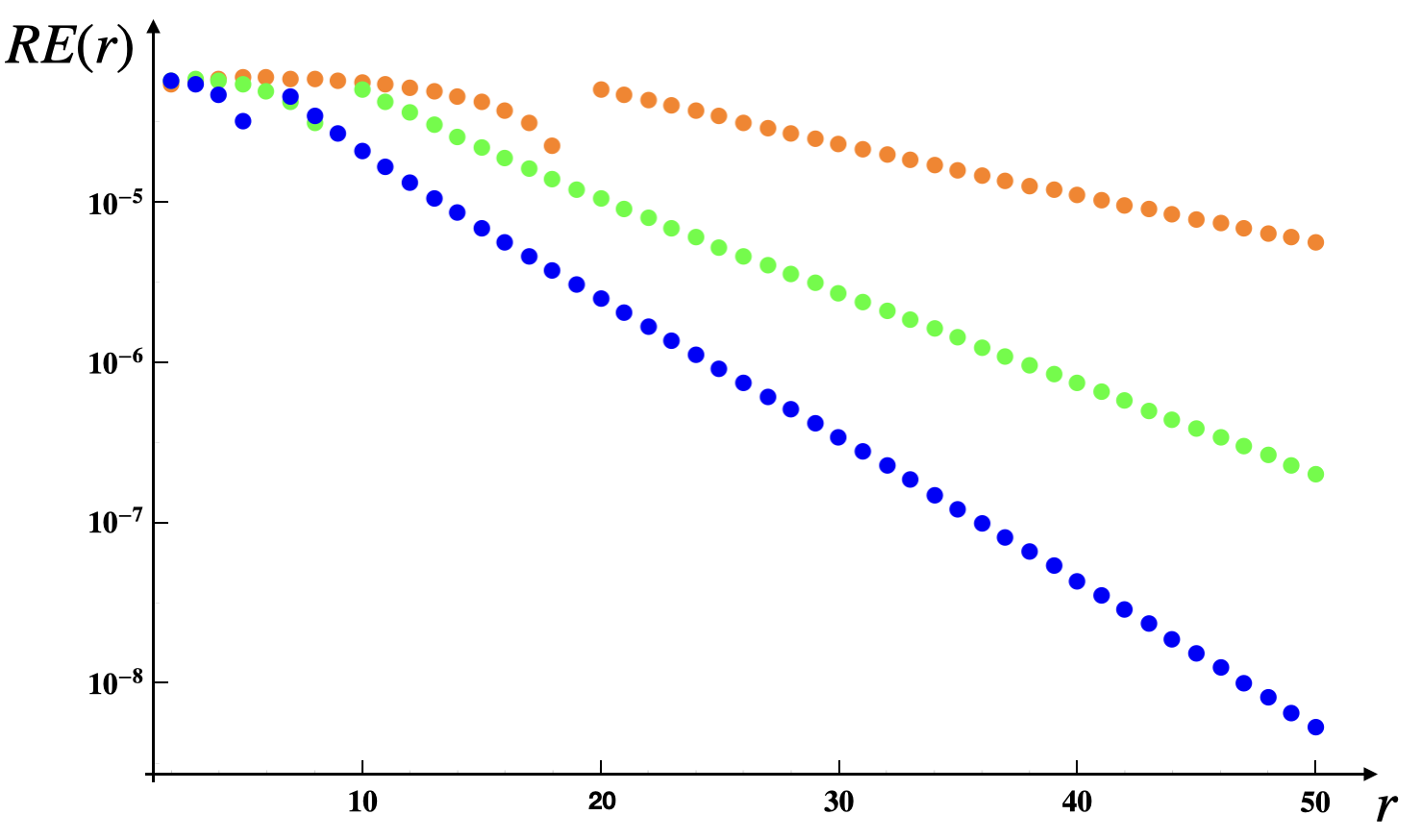}}
\caption{The relative error between a very precise numerical approximation of $\gamma^r_{max}$ and the lower bound $l_b$ in Corollary \ref{cor:stime}, defined by $RE(r):=(\gamma^r_{max} - l_b)/ \gamma^r_{max}$ and displayed as a function of the length $r$ of the hole in log-linear scale. The decay towards zero shows that the accuracy of the estimate improves exponentially with the length of the hole. Different curves correspond to different values of $p$ (from bottom to top: $p=0.85$, $p=0.9$, $p=0.95$).}
\label{FigRelErr}
\end{figure}

\section{The case of Markov measures} \label{sec:markov}
In the previous sections we have considered the dynamical system $(\AAA^{\N},\sigma)$ endowed with a product probability measure. In this section we discuss the extension of some of our results to the case of \emph{Markov measures} limiting ourselves to the alphabet $\AAA=\{a,b\}$ with two symbols. Given a stochastic matrix
\[
\Pi = \begin{pmatrix} \pi_{aa} & \pi_{ab} \\ \pi_{ba} & \pi_{bb} \end{pmatrix}
\]
with $\pi_{ij}\ge 0$ for all $i,j\in \AAA$ and $\pi_{aa}+\pi_{ab} = \pi_{ba}+\pi_{bb}=1$, we consider the set
\[
\AAA^{\N}_{_\Pi} := \set{ \omega \in \AAA^{\N} \, :\, \pi_{\omega_i \omega_{i+1}} >0\, \text{ for all $i\ge 0$}}
\]
and the action of the shift transformation $\sigma$ on $\AAA^{\N}_{_\Pi}$. One can define analogously the set of allowed finite words $\AAA^{*}_{_\Pi}$. It is well known that if the matrix $\Pi$ is irreducible and aperiodic, that is, there exists $N>0$ such that all the entries of $\Pi^n$ are positive for $n\ge N$, then there is a unique vector $p=(p_a, p_b)$ such that $p_a,p_b >0$, $p_a+p_b=1$ and $p\Pi =p$. In this situation the shift $\sigma$ preserves the probability measure $m_{_\Pi}$, called  the \emph{Markov measure}, defined on finite words $s= (s_0 s_1 \dots s_{k-1}) \in \AAA^k$ to be
\[
m_{_\Pi}(s) = p_{s_0}\, \prod_{j=0}^{k-2}\, \pi_{s_j\, s_{j+1}}.
\]
In this section we are interested to the symbolic dynamical system $(\AAA^{\N}_{_\Pi},m_{_\Pi},\sigma)$, which is well known to be ergodic. We can then study the escape rates for holes in $\AAA^{\N}_{_\Pi}$ given by finite words.

 The following parameter $\chi_{_\Pi} \in (-1,1)$ will play an important role:
\begin{equation} \label{param-chi}
\chi_{_\Pi}:=\pi_{aa}+\pi_{bb}-1.
\end{equation}
Note that for $\chi_{_\Pi}=0$
we have $\pi_{aa}=\pi_{ba}$, hence the rows of $\Pi$ are equal and the Markov measure becomes a product measure. Thus the case $\chi_{_\Pi}=0$ corresponds to those studied in the previous sections.

Special examples of the system $(\AAA^{\N}_{_\Pi},m_{_\Pi},\sigma)$ are \emph{subshifts of finite type}, which correspond to stochastic matrices $\Pi$ with at least one vanishing entry. In addition, shifts with a Markov measure are isomorphic to piecewise linear Markov maps of the interval and to Markov chains, hence our results hold for these classes of systems too. We also mention that, while interesting in their own right, Markov systems are often used as first-order approximations of more general nonlinear systems.
%\vspace{0.3cm}

We start by introducing  the autocorrelation polynomial of a finite word adapted to the Markov case:

\begin{definition}\label{Mwap}
Let $s\in \{a,b\}^n$, and let $c=(c_0,\dots,c_{n-1})$ denote its autocorrelation vector given in Definition \ref{wap}. Then the \emph{Markovian weighted autocorrelation polynomial} of $s$ is a polynomial in 5 variables given by
\[
c_{s,M}(y_{aa},y_{ab},y_{ba},y_{bb},z) := \sum_{j=0}^{n-1}\, c_j\, \Big( \prod_{i=1}^j\, y_{s_{n-i-1}\, s_{n-i}} \Big)\, z^j
\]
with the convention $\prod_{i=1}^0\, y_{s_{n-i-1}\, s_{n-i}} =1$.
\end{definition}

In Appendix \ref{app:proof-thm-markov} we prove the following result.

\begin{proposition}\label{prop:escape-rate-markov}
The escape rate $\escrate{w}$ of a hole $w$ of length $r$ is given by
\[
\escrate{w} = \log \, z_0\, ,
\]
where $z_0$ is the smallest positive zero of the polynomial
\begin{equation} \label{pol-tau-markov}
\begin{aligned}
\tau_{w,\Pi}(z) :=\ & \mu_{_\Pi}(w)\, z^r \Big(\pi_{w_{r-1} w_0} -\chi_{_\Pi}\, z\, \delta_{w_0 w_{r-1}}\Big)\\ & + (1- z)\, (1-\chi_{_\Pi}\, z)\, c_{w,M}(\pi_{aa},\pi_{ab},\pi_{ba},\pi_{bb},z).
\end{aligned}
\end{equation}
Here $\mu_{_\Pi}(w) := \prod_{j=0}^{r-2}\, \pi_{w_j w_{j+1}}$ is the $m_{_\Pi}$-measure of the hole $w$ divided by the probability $p_{w_0}$ of the first symbols of $w$,
 the symbol $\delta_{\cdot,\cdot}$ denotes the classical Kronecker delta, and $c_{w,M}(y_{aa},y_{ab},y_{ba},y_{bb},z)$ is the Markovian weighted autocorrelation polynomial of $w$.
\end{proposition}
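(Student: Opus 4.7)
The plan is to adapt the generating function argument used in Proposition \ref{prop:escape-rate} to the Markov setting, with the transfer matrix $\Pi$ playing the role of the independent letter distribution. The first step is to identify the survival probability $p_n$ with a $\Pi$-weighted sum over admissible length-$(n+r)$ words $s$ that do not contain $w$ as a factor starting at any position in $\{0,1,\ldots,n\}$. Splitting this sum by the initial and final letters of $s$, I organize it into generating series $F_{ij}(z):=\sum_{n\ge 0}z^n q_{ij}^{(n)}$, where $q^{(n)}_{ij}$ is the $\Pi$-transition-weighted count of the admissible words above that start with symbol $i$ and end with symbol $j$. The total survival generating function is then $P(z)=\sum_{n\ge 0}p_n z^n =\sum_{i,j} p_i F_{ij}(z)$, and its smallest positive singularity determines $\escrate{w}$.

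The second step is to derive the Markov analog of the Guibas--Odlyzko correlation identity. Let $T_{ij}(z)$ be the analogous generating series, restricted to admissible words whose last $r$ letters equal $w$ and in which $w$ does not appear at an earlier position. A one-letter extension argument, carried out as an identity of $\AAA\times\AAA$ matrices, expresses $F_{ij}$ in terms of $T_{ij}$ via multiplication by $(I-z\Pi)$. A second, Goulden--Jackson-type relation is obtained by appending $w$ to an $F$-word and classifying occurrences of $w$ in the concatenation according to their overlap with the appended copy: the overlap weights are precisely the contributions $\prod_{i=1}^{j}\pi_{w_{n-i-1}w_{n-i}}$ appearing in the Markovian autocorrelation polynomial $c_{w,M}$ of Definition \ref{Mwap}, with transition probabilities playing the role of letter probabilities in the product-measure case.

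The third step is to reduce the resulting $2\times 2$ matrix equation to the scalar polynomial $\tau_{w,\Pi}$ via the spectral decomposition of $\Pi$. Since $\Pi$ has eigenvalues $1$ and $\chi_\Pi$, one has $\det(I-z\Pi)=(1-z)(1-\chi_\Pi z)$; inverting $(I-z\Pi)$ through its adjugate and contracting against the stationary vector reproduces the factor $(1-z)(1-\chi_\Pi z)\,c_{w,M}$ in \eqref{pol-tau-markov}. The term $\mu_\Pi(w)\,z^r\,\pi_{w_{r-1}w_0}$ arises from closing the last letter of $w$ back to its first through the transition matrix, while the Kronecker correction $-\chi_\Pi z\,\delta_{w_0 w_{r-1}}$ stems from the component of this closure along the non-stationary eigenvector of $\Pi$, which survives exactly when $w_0=w_{r-1}$. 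Collecting terms yields $P(z)=N(z)/\tau_{w,\Pi}(z)$ with $N(z)$ a polynomial strictly positive on $[0,1]$, so $\escrate{w}=\log z_0$ follows from standard singularity analysis.

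The main obstacle will be the careful bookkeeping of these boundary contributions: one has to verify that, after combining the matrix pieces, the subleading $\chi_\Pi$-corrections coalesce into precisely the compact form $\pi_{w_{r-1}w_0}-\chi_\Pi z\,\delta_{w_0 w_{r-1}}$, and, as a consistency check, that for $\chi_\Pi=0$ the polynomial reduces to the product-measure polynomial \eqref{pol-tau}. A small sanity check --- e.g., computing $\tau_{w,\Pi}$ for $w=(aa)$ or $w=(ab)$ and matching with the direct two-state survival analysis --- would pin down the signs and indices before attempting the general case.
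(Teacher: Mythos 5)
Your proposal follows essentially the same route as the paper's proof in Appendix~B: generating functions for $w$-avoiding words keeping track of the first and last letters, a one-letter-extension identity (the matrix form of the paper's equations for $\Sigma^a,\Sigma^b$, i.e.\ multiplication by $I-z\Pi$) together with an append-$w$ correlation identity whose overlap weights are exactly the Markovian autocorrelation polynomial $c_{w,M}$, and then solving the resulting small linear system, whose determinant produces the factor $\det(I-z\Pi)=(1-z)(1-\chi_{_\Pi}z)$ and the boundary term $\pi_{w_{r-1}w_0}-\chi_{_\Pi}z\,\delta_{w_0 w_{r-1}}$ after using stochasticity of $\Pi$. Your adjugate/spectral packaging is just a compact reformulation of the paper's case-by-case solution of the system in $\Sigma^a$, $\Sigma^b$, $W$, so the two arguments coincide in substance.
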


Note that the polynomial $\tau_{w,\Pi}(z)$ is of degree $r$. In fact the Markovian weighted autocorrelation polynomial can be written as
\[
c_{w,M}(\pi_{aa},\pi_{ab},\pi_{ba},\pi_{bb},z) = \delta_{w_0,w_{r-1}}\, \mu_{_\Pi}(w)\, z^{r-1} + \tilde c_{w,M}(\pi_{aa},\pi_{ab},\pi_{ba},\pi_{bb},z)\, ,
\]
where $\tilde c_{w,M}(\pi_{aa},\pi_{ab},\pi_{ba},\pi_{bb},z) := \sum_{j=0}^{r-2}\, c_j\, \Big( \prod_{i=1}^j\, \pi_{w_{r-i-1}\, w_{r-i}} \Big)\, z^j$. Hence the terms of degree $r+1$ in \eqref{pol-tau-markov} cancel out. Moreover, we can write
\begin{equation}\label{pol-tau-markov-due}
\begin{aligned}
\tau_{w,\Pi}(z) =\ & \tilde \mu_{_\Pi}(w)\, z^r + (1- z)\, (1-\chi_{_\Pi}\, z)\, \tilde c_{w,M}(\pi_{aa},\pi_{ab},\pi_{ba},\pi_{bb},z)\\ & + \delta_{w_0 w_{r-1}} \mu_{_\Pi}(w) (1-(1+\chi_{_\Pi})\,z)\, z^{r-1},
\end{aligned}
\end{equation}
where $\tilde \mu_{_\Pi}(w):=\mu_{_\Pi}(w)\, \pi_{w_{r-1} w_0} = \prod_{j=0}^{r-1}\, \pi_{w_j w_{j+1}}$ with $w_r:= w_0$.

\begin{example}\label{two-symbols-markov}
Consider the case of holes of length $r=2$.
\begin{itemize}
\item[$w \in \{(aa),(bb)\}$.]  Let us start with the case $w=(aa)$. The autocorrelation vector is $c=(1,1)$, and  the Markovian weighted autocorrelation polynomial is
\[
\tilde c_{w,M}(\pi_{aa},\pi_{ab},\pi_{ba},\pi_{bb},z) = 1 \, , \qquad \delta_{w_0,w_{r-1}}\, \mu_{_\Pi}(w)\, z^{r-1} = \pi_{aa}\, z.
\]
Hence with $\tilde \mu_{_\Pi}(w)= \pi^2_{aa}$ we obtain
\[
\tau_{w,\Pi}(z) = -(1-\pi_{aa})(1-\pi_{bb})\, z^2 - \pi_{bb}\, z +1
\]
and the smallest positive zero is given by
\[
z_0 = \left\{ \begin{array}{ll} \frac{-\pi_{bb} + \sqrt{\pi^2_{bb}+4(1-\pi_{aa})(1-\pi_{bb})}}{2(1-\pi_{aa})(1-\pi_{bb})}\, , & \text{if }\, (1-\pi_{aa})(1-\pi_{bb})\not= 0;\\[0.2cm]
\frac{1}{\pi_{bb}}\, , & \text{if }\, (1-\pi_{aa})(1-\pi_{bb})= 0.
\end{array} \right.
\]
\\
The case $w=(bb)$ works analogously, interchanging the role of $\pi_{aa}$ and $\pi_{bb}$. Hence the smallest positive zero of $\tau_{w,\Pi}$ is given by
\[
z_0 = \left\{ \begin{array}{ll} \frac{-\pi_{aa} + \sqrt{\pi^2_{aa}+4(1-\pi_{aa})(1-\pi_{bb})}}{2(1-\pi_{aa})(1-\pi_{bb})}\, , & \text{if }\, (1-\pi_{aa})(1-\pi_{bb})\not= 0;\\[0.2cm]
\frac{1}{\pi_{aa}}\, , & \text{if }\, (1-\pi_{aa})(1-\pi_{bb})= 0.
\end{array} \right.
\]
\item[$w\in \{ (ab),(ba) \} $.] For both holes, the autocorrelation vector is $c=(1,0)$, and  the Markovian weighted autocorrelation polynomial is
\[
c_{w,M}(\pi_{aa},\pi_{ab},\pi_{ba},\pi_{bb},z)= \tilde c_{w,M}(\pi_{aa},\pi_{ab},\pi_{ba},\pi_{bb},z) = 1.
\]
Hence with $\tilde \mu_{_\Pi}(w)= \pi_{ab}\pi_{ba}=(1-\pi_{aa})(1-\pi_{bb})$ we obtain
\[
\tau_{w,\Pi}(z) = \pi_{aa}\pi_{bb}\, z^2 - (\pi_{aa}+\pi_{bb})\, z +1
\]
and the smallest positive zero is given by
\[
z_0 = \frac{1}{\max\{ \pi_{aa},\pi_{bb}\}}.
\]
\end{itemize}
\end{example}
%\vspace{0.3cm}

The previous example shows that the identification of the hole with maximal escape rate for shifts with a Markov measure is a much more difficult problem than the system with a product measure. When trying to extend the results in Section \ref{sec:bigger} to this case, one immediately finds differences and subtleties; here we made a first step in this direction. The investigation of the Markov case in its generality is outside the scope of the present work and will be the subject of future study.

We start with the analogue of Lemma \ref{lemma-elem-1}. As we will see, in this case it is useful to compare holes with the same length and quantity $\tilde \mu_{_\Pi}$ introduced in \eqref{pol-tau-markov-due}, which replaces the measure of a hole.

\begin{proposition}\label{prop-chi}
Let $w_1$ and $w_2$ be two holes of the same length with $\tilde \mu_{_\Pi}(w_1)=\tilde \mu_{_\Pi}(w_2)$, and let $w_1$ be prime. If $w_2$ is prime then $\escrate{w_1}= \escrate{w_2}$. If $w_2$ is not prime, then:
\begin{enumerate}[(i)]
\item If $\chi_{_\Pi}>0$ we have $\escrate{w_1}> \escrate{w_2}$.
\item If $\chi_{_\Pi}<0$ and $w_2$ is such that $(w_2)_0 \not= (w_2)_{r-1}$, then $\escrate{w_1}> \escrate{w_2}$.
\end{enumerate}
\end{proposition}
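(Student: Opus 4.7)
The plan is a sign comparison of $\tau_{w_1,\Pi}$ and $\tau_{w_2,\Pi}$ near their smallest positive roots, based on the explicit form \eqref{pol-tau-markov-due}. Primeness of $w_1$ forces its autocorrelation vector to be $(1,0,\dots,0)$, so $\tilde c_{w_1,M}\equiv 1$ and $\delta_{(w_1)_0,(w_1)_{r-1}}=0$, and \eqref{pol-tau-markov-due} collapses to
\[
\tau_{w_1,\Pi}(z)=\tilde\mu_{_\Pi}(w_1)\,z^r+(1-z)(1-\chi_{_\Pi} z).
\]
If $w_2$ is prime as well, the same formula holds with $\tilde\mu_{_\Pi}(w_2)=\tilde\mu_{_\Pi}(w_1)$, so $\tau_{w_1,\Pi}\equiv\tau_{w_2,\Pi}$ and the escape rates coincide. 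Assume henceforth that $w_2$ is not prime, set $\delta:=\delta_{(w_2)_0,(w_2)_{r-1}}$, and subtract to obtain
\[
\Delta(z):=\tau_{w_1,\Pi}(z)-\tau_{w_2,\Pi}(z)=(1-z)(1-\chi_{_\Pi} z)\bigl[1-\tilde c_{w_2,M}(\Pi,z)\bigr]-\delta\,\mu_{_\Pi}(w_2)\bigl(1-(1+\chi_{_\Pi})z\bigr)z^{r-1}.
\]

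Since $\tau_{w_1,\Pi}$ is strictly positive on $[0,1]$ (both summands are manifestly non-negative there, using $\chi_{_\Pi}\in(-1,1)$), the standard comparison argument applies: once $\Delta(z)>0$ on $(1,z_0(w_2)]$ is established, one has $\tau_{w_1,\Pi}>\tau_{w_2,\Pi}\ge 0$ on that interval, so $\tau_{w_1,\Pi}$ has no root there and $z_0(w_1)>z_0(w_2)$. In case (ii), the hypotheses $\delta=0$ and $w_2$ not prime together with $c_{r-1}=0$ force some $c_j=1$ with $1\le j\le r-2$, hence $\tilde c_{w_2,M}(\Pi,z)>1$ for every $z>0$; moreover $\chi_{_\Pi}<0$ gives $(1-\chi_{_\Pi} z)>0$ throughout $z>0$, so the sign of $\Delta(z)=(1-z)(1-\chi_{_\Pi} z)(1-\tilde c_{w_2,M})$ on $(1,\infty)$ is $(-)(+)(-)>0$ and the comparison runs immediately. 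In case (i), $\chi_{_\Pi}>0$, the same factor-by-factor inspection shows that on the sub-interval $(1,1/\chi_{_\Pi})$ the first summand of $\Delta$ is $\ge 0$ (signs $(-)(+)(\le 0)$) and the second summand is $>0$ (because $z>1>1/(1+\chi_{_\Pi})$ makes $1-(1+\chi_{_\Pi})z<0$), hence $\Delta>0$ there.

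The crux and main technical obstacle in case (i) is certifying the localization $z_0(w_2)<1/\chi_{_\Pi}$, so that the positivity interval for $\Delta$ actually reaches $z_0(w_2)$. If $\delta=1$, substituting $z=1/\chi_{_\Pi}$ in \eqref{pol-tau-markov} (the factor $(1-\chi_{_\Pi} z)$ vanishes) yields
\[
\tau_{w_2,\Pi}\!\left(\tfrac{1}{\chi_{_\Pi}}\right)=\frac{\mu_{_\Pi}(w_2)\bigl(\pi_{(w_2)_0(w_2)_0}-1\bigr)}{\chi_{_\Pi}^{\,r}}<0,
\]
strict negativity coming from irreducibility of $\Pi$, which prevents $\pi_{(w_2)_0(w_2)_0}=1$; together with $\tau_{w_2,\Pi}(1)>0$ (direct from \eqref{pol-tau-markov}, using that $\chi_{_\Pi}>0$ forces both diagonal entries of $\Pi$ to be positive), the intermediate value theorem delivers $z_0(w_2)\in(1,1/\chi_{_\Pi})$. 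If $\delta=0$, evaluation at $1/\chi_{_\Pi}$ instead gives $\tau_{w_2,\Pi}(1/\chi_{_\Pi})>0$ and IVT is unavailable; I would replace it by a monotonicity argument, checking that $\tau_{w_2,\Pi}'(z)>0$ for all $z\ge 1/\chi_{_\Pi}$. This is a term-by-term inspection of the three contributions to the derivative, using non-negativity of the coefficients of $c_{w_2,M}$ and the elementary bound $-1-\chi_{_\Pi}+2\chi_{_\Pi} z\ge 1-\chi_{_\Pi}>0$ on $[1/\chi_{_\Pi},\infty)$. Monotonicity then forces $\tau_{w_2,\Pi}>0$ on $[1/\chi_{_\Pi},\infty)$, hence $z_0(w_2)<1/\chi_{_\Pi}$, closing the argument. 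The genuine obstacle is precisely this $\delta=0$ sub-case of (i), where the naive intermediate value approach breaks down and must be replaced by monotonicity past $1/\chi_{_\Pi}$.
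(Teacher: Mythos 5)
Your proof is correct, and in case (ii) it is essentially the paper's argument: both reduce to the pointwise comparison $\tau_{w_2,\Pi}(z)<\tau_{w_1,\Pi}(z)$ for $z>1$, coming from \eqref{pol-tau-markov-due}, $(1-z)(1-\chi_{_\Pi}z)<0$ being replaced by the observation that this factor is negative times $\tilde c_{w_2,M}-1>0$. Where you genuinely diverge is case (i): you make the localization $z_0(w_2)<1/\chi_{_\Pi}$ the crux and prove it by a sub-case analysis ($\delta=1$: evaluate at $z=1/\chi_{_\Pi}$ to get $\mu_{_\Pi}(w_2)\chi_{_\Pi}^{-r}\bigl(\pi_{(w_2)_0(w_2)_0}-1\bigr)<0$ and apply the intermediate value theorem; $\delta=0$: show $\tau_{w_2,\Pi}'>0$ on $[1/\chi_{_\Pi},\infty)$). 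Both sub-arguments check out — in particular the derivative splits as $r\tilde\mu_{_\Pi}(w_2)z^{r-1}+(2\chi_{_\Pi}z-1-\chi_{_\Pi})\,\tilde c_{w_2,M}+(1-z)(1-\chi_{_\Pi}z)\,\tilde c_{w_2,M}'$, which is termwise nonnegative and strictly positive for $z\ge 1/\chi_{_\Pi}$ — so the proof is complete, but this crux is self-imposed. The paper localizes the roots of the \emph{prime} polynomial instead: $\tau_{w_1,\Pi}(z)=\tilde\mu_{_\Pi}(w_1)z^r+(1-z)(1-\chi_{_\Pi}z)$ is manifestly positive on $[0,1]$ and on $[1/\chi_{_\Pi},\infty)$, so its smallest positive root $z_0(w_1)$ lies in $(1,1/\chi_{_\Pi})$; evaluating the strict inequality $\tau_{w_2,\Pi}<\tau_{w_1,\Pi}$, valid on all of $(1,1/\chi_{_\Pi})$, at $z=z_0(w_1)$ gives $\tau_{w_2,\Pi}(z_0(w_1))<0$, and since $\tau_{w_2,\Pi}(0)=1>0$ this forces $z_0(w_2)<z_0(w_1)$ — no localization of $z_0(w_2)$, no monotonicity lemma, no appeal to irreducibility beyond $\mu_{_\Pi}(w_2)>0$. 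One cosmetic slip in your case (i): when $\delta=0$ the second summand of $\Delta$ vanishes rather than being positive; strict positivity of $\Delta$ then comes from the first summand, because non-primeness together with $c_{r-1}=0$ forces $\tilde c_{w_2,M}>1$ for $z>0$, exactly as you argue in case (ii) — worth stating explicitly so that the strictness of the final inequality is covered in both sub-cases.
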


\begin{proof}
Let $w_1$ be a prime hole of length $r$. We have $c_{w_1,M}(z)=1$ and
\[
\tau_{w_1,\Pi}(z) = \tilde \mu_{_\Pi}(w_1)\, z^r +(1-z)(1-\chi_{_\Pi} z).
\]
It is clear that if $w_2$ is prime and $\tilde \mu_{_\Pi}(w_2)=\tilde \mu_{_\Pi}(w_1)$, then $c_{w_2,M}(z)= 1$ and $\tau_{w_2,\Pi} = \tau_{w_1,\Pi}$. Let's assume that $w_2$ is not prime.

(i) Let $\chi_{_\Pi}>0$. Since $\frac{1}{\chi_{_\Pi}}>1$, all the positive roots of $\tau_{w_1,\Pi}(z)$ are contained in the interval $(1,\frac{1}{\chi_{_\Pi}})$. Since $w_2$ is not prime, at least one condition between $\tilde c_{w_2,M}(z)>1$ and $\delta_{(w_2)_0 (w_2)_{r-1}}=1$ holds. Then using \eqref{pol-tau-markov-due}
\[
\tau_{w_2,\Pi}(z) < \tilde \mu_{_\Pi}(w_2)\, z^r +(1-z)(1-\chi_{_\Pi} z) = \tau_{w_1,\Pi}(z), \quad \forall\, z\in \left(1,\frac{1}{\chi_{_\Pi}}\right),
\]
because $(1-z)(1-\chi_{_\Pi} z) <0$ and $(1-(1+\chi_{\Pi})z)<0$ in the interval $(1,\frac{1}{\chi_{_\Pi}})$. Since $\tau_{w_1,\Pi}(z)$ has a root in $(1,\frac{1}{\chi_{_\Pi}})$ it follows that the smallest positive root of $\tau_{w_2,\Pi}$ is smaller than that of $\tau_{w_1,\Pi}$, hence $\escrate{w_1}> \escrate{w_2}$.

(ii) Let $\chi_{_\Pi}<0$. In this case the term $(1-\chi_{_\Pi} z)$ is positive for $z>0$, and we only know that the positive roots of $\tau_{w_1,\Pi}(z)$ are greater than 1. Since $w_2$ is not prime, if $(w_2)_0 \not= (w_2)_{r-1}$ then $\tilde c_{w_2,M}(z)>1$ and using \eqref{pol-tau-markov-due}
\[
\tau_{w_2,\Pi}(z) < \tilde \mu_{_\Pi}(w_2)\, z^r +(1-z)(1-\chi_{_\Pi} z) = \tau_{w_1,\Pi}(z), \quad \forall\, z >1.
\]
Hence  the smallest positive root of $\tau_{w_2,\Pi}$ is smaller than that of $\tau_{w_1,\Pi}$, and $\escrate{w_1}> \escrate{w_2}$.
\end{proof}

When the parameter $\chi_{_\Pi}$ is negative, it is possible to find conditions for a stochastic matrix to have non-prime holes with a larger escape rate than the prime holes with the same $\tilde \mu_{_\Pi}$.

If $w_1=(aabb\dots b)$ and $w_2=(abb\dots ba)$ are two words of length $r\ge 3$ with two symbols $a$ and $r-2$ symbols $b$, then
\[
\tilde \mu_{_\Pi}(w_1)=\tilde \mu_{_\Pi}(w_2)=\pi_{aa}\, \pi_{ab}\, \pi_{ba}\, \pi_{bb}^{r-3}.
\]
The hole $w_1$ is prime whereas $w_2$ is not prime, and
\[
\begin{aligned}
& \tau_{w_1,\Pi}(z) = \tilde \mu_{_\Pi}(w_1)\, z^r + (1-z)(1-\chi_{_\Pi} z), \\
& \tau_{w_2,\Pi}(z) =\tau_{w_1,\Pi}(z) + \pi_{ab}\, \pi_{ba}\, \pi_{bb}^{r-3} (1-(1+\chi_{_\Pi})z)\, z^{r-1}.
\end{aligned}
\]
Therefore $\tau_{w_2,\Pi}(z) > \tau_{w_1,\Pi}(z)$ for $z\in (1,\frac{1}{1+\chi_{_\Pi}})$, and $\tau_{w_2,\Pi}(z) < \tau_{w_1,\Pi}(z)$ for $z>\frac{1}{1+\chi_{_\Pi}}$. Hence $\escrate{w_2}> \escrate{w_1}$ if and only if the smallest positive root of $\tau_{w_1,\Pi}$ is smaller than $\frac{1}{1+\chi_{_\Pi}}$.

In Fig. \ref{markov-r3} we show the behaviour of the escape rates for the holes of length $r=3$ as functions of $\pi_{aa}$ and $\pi_{bb}$. The figure clearly shows that for $\chi_{_\Pi}$ negative there are values of $\pi_{aa}$ and $\pi_{bb}$ for which the maximal escape rate is realized by the holes $(aba)$ and $(bab)$ (green surface), which are neither prime nor a repetition of a single symbol.

\begin{figure}[ht!]
\centerline{\includegraphics[width=9cm,keepaspectratio]{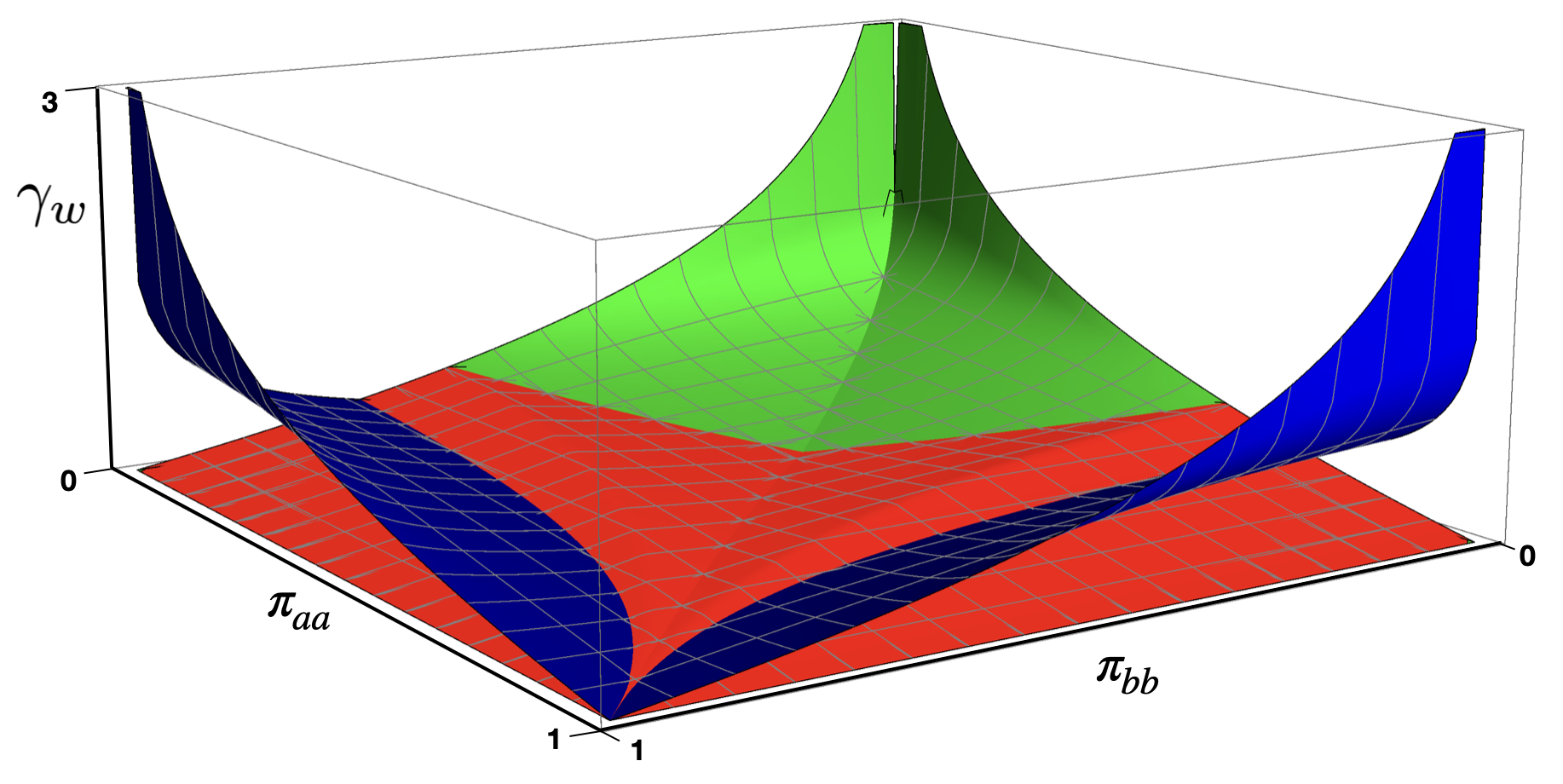}}
\caption{The escape rates for the system of Section \ref{sec:markov}, as functions of
$\pi_{aa},\pi_{bb}\in (0,1)$: the blue graph is for the holes $w=(aaa)$ and $w=(bbb)$;  the red graph is for the holes $w=(aab)$, $w=(bba)$, $w=(baa)$ and $w=(abb)$;  the green graph is for the holes $w=(aba)$ and $w=(bab)$.}
\label{markov-r3}
\end{figure}

\begin{rem}
We briefly compare the results of this section with those of \cite{agar2}. The theorems of \cite{agar2} hold for subshifts of finite type also with more than two symbols (some of the results actually require the number of symbols in the alphabet to be larger than a certain bound) and with respect to the \emph{Parry measure}, the Markov measure of maximal entropy. In the cases covered by both papers, one can easily check that the results obtained by our Proposition \ref{prop:escape-rate-markov} coincide with those in \cite[Thm 2.1]{agar2} (see also \cite[Thm 3.1]{agar}) in the elementary case of holes of length $r=2$, by following the computations in Example \ref{two-symbols-markov} in the case $\pi_{aa}=0$ or $\pi_{bb}=0$ (these are the only cases of a $2\times 2$ irreducible and aperiodic stochastic matrix $\Pi$ with one vanishing entry). For holes of length $r\ge 3$ the escape rates have cumbersome or implicit expressions, therefore the only convenient way to compare escape rate is by numerical approximation.
\end{rem}

\appendix

\section{Proof of Proposition \ref{prop:escape-rate}} \label{app:proof-thm-iid}

Given a hole $w$ of length $r$, we have
\[
S_n = \set{ \omega\in \AAA^{\N}\, :\, \omega^{n+r}\, \text{ does not contain $w$ as a sub-word}}, \quad \forall\, n\ge 0
\]
where $\omega^k\in \AAA^k$ denotes the finite sub-word of $\omega$ given by the first $k$ symbols. Let us introduce the sets
\begin{equation} \label{sigmas}
\Sigma_\ell := \Sigma_\ell^{(w)} := \set{ s \in \AAA^\ell\, :\, s \text{ does not contain $w$ as a sub-word}},
\end{equation}
then clearly $\Sigma_\ell = \AAA^\ell$ for $\ell <r$, and we can write
\begin{equation}\label{esses}
S_n = \set{ \omega\in \AAA^{\N}\, :\, \omega^{n+r}\, \in \Sigma_{n+r}}, \quad \forall\, n\ge 0.
\end{equation}
Let us recall that a word $s\in \AAA^\ell$ defines a cylinder $C_s \subset \AAA^{\N}$, the set of all words in $\AAA^{\N}$ beginning with $s$, and that, by definition of the product probability measure $\mu$,
\[
\m(C_s) = \prod_{j=0}^{\ell-1}\, p_{s_j}.
\]
Hence for the survival probability $p_n$ we have
\[
p_n = \m(S_n) = \sum_{s \in \Sigma_{n+r}}\, \m(C_s) = \sum_{s \in \Sigma_{n+r}}\, \prod_{j=0}^{n+r-1}\, p_{s_j},
\]
which becomes
\begin{equation}\label{form-p-2}
p_n = \m(S_n) = \sum_{s \in \Sigma_{n+r}}\, \prod_{a\in \AAA}\, ( p_a )^{N_s(a)}
\end{equation}
if we use the counting function of Definition \ref{count-fun}.

A classical method to study the exponential behaviour of a sequence is to use its generating function. Let $P(z)$ be the generating function of $\{p_n\}$, that is
\[
P(z) := \sum_{n=0}^\infty\, p_n\, z^n,
\]
and $\rho$ be the radius of convergence of $P(z)$. Then $\rho = e^{\escrate{w}^-}\le e^{\escrate{w}^+}$, thus $\escrate{w}:=\escrate{w}^-$ is given by the logarithm of the modulus of the smallest pole of $P(z)$.

We now use \eqref{form-p-2} to find an explicit expression for the generating function $P(z)$. For the sets $\Sigma_\ell$ defined in \eqref{sigmas}, let
\begin{equation}\label{gen-funct-sigma0}
\alpha_{k_1,\dots,k_A,\ell} := \# \set{ s \in \Sigma_\ell\, :\, N_s(a_i) = k_i\, ,\ i=1,\dots,A}.
\end{equation}
The power series
\begin{equation}\label{gen-funct-sigma}
\Sigma(x_{a_1},x_{a_2},\dots,x_{a_A},z) := \sum_{\ell=0}^\infty\, \Big( \sum_{k_1+\dots+k_A=\ell}\, \alpha_{k_1,\dots,k_A,\ell} \, (x_{a_1})^{k_1}\dots (x_{a_A})^{k_A}\Big) \, z^\ell,
\end{equation}
where all $k_i$ are assumed to be non-negative integers, is called the generating function of the sets $\Sigma_\ell$. We recall that
\[
\sum_{k_1+\dots+k_A=\ell}\, \alpha_{k_1,\dots,k_A,\ell} \, (x_{a_1})^{k_1}\dots (x_{a_A})^{k_A} = (x_{a_1}+x_{a_2}+\dots+x_{a_A})^\ell
\]
for all $\ell=0,\dots,r-1$.

Following \cite[Proposition I.4]{FS}, we prove the following result.
\begin{lemma}\label{res-fund}
Let $c_w = c_w(x_{a_1},x_{a_2},\dots,x_{a_A},z)$ be the weighted autocorrelation polynomial of the word $w$ of length $r$. Then
\[
\begin{aligned}
& \Sigma(x_{a_1},x_{a_2},\dots,x_{a_A},z)\\ & = \frac{c_w(x_{a_1},x_{a_2},\dots,x_{a_A},z)}{\Big( \prod_{i=1}^A\, (x_{a_i})^{N_w(a_i)} \Big)\, z^r + \Big[ 1- \Big( \sum_{i=1}^A\, x_{a_i}\Big)\, z\Big]\, c_w(x_{a_1},x_{a_2},\dots,x_{a_A},z)}\, .
\end{aligned}
\]
\end{lemma}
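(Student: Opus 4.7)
The plan is to follow the symbolic-combinatorics template of Flajolet--Sedgewick (the statement is essentially their Proposition I.4, refined with one variable per letter): set up two structural bijections for the class $\Sigma$ of $w$-avoiding words, convert them into generating-function identities, and solve the resulting linear system for $\Sigma$.

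First I would introduce the auxiliary class $F$ of finite words whose first occurrence of $w$ is terminal, i.e.\ starts at position $|u|-r$. The initial bijection comes from appending a single letter: for any nonempty word of the form $va$ with $v\in\Sigma$ and $a\in\AAA$, either $va\in\Sigma$ or else the appended letter has just produced the first (necessarily terminal) occurrence of $w$, so $va\in F$. Conversely, every element of $\Sigma\cup F$ other than $\epsilon$ arises in this way uniquely by stripping its last letter. Weighting each word $u$ by $\prod_i (x_{a_i})^{N_u(a_i)}\,z^{|u|}$ turns this into
\[
1 + \Sigma\cdot\Big(\sum_{i=1}^A x_{a_i}\Big)\,z \;=\; \Sigma + F,
\]
where I use the same symbols $\Sigma$ and $F$ for the classes and their generating functions.

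The second bijection is the one that brings in the autocorrelation polynomial. Given $s\in\Sigma$, the concatenation $sw$ contains $w$, and a direct inspection shows that the position of the first occurrence of $w$ in $sw$ is $|s|-k$ for some $k\in\{0,1,\dots,r-1\}$; moreover, the admissible shifts $k$ are exactly those with $c_k=1$, since such an overlap forces the length-$(r-k)$ prefix of $w$ to coincide with its length-$(r-k)$ suffix. Truncating $sw$ after position $|s|+r-k$ sends $(s,w)$ to a pair $(f,\tau_k)$ with $f\in F$ and $\tau_k=(w_{r-k},\dots,w_{r-1})$; conversely, given any such pair with $c_k=1$, the overlap condition ensures that $f\tau_k$ is of the form $s'w$ with $s'\in\Sigma$. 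Passing to generating functions gives
\[
\Sigma \cdot \prod_{i=1}^A (x_{a_i})^{N_w(a_i)}\,z^r \;=\; F \cdot c_w(x_{a_1},\dots,x_{a_A},z).
\]

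Eliminating $F$ between the two identities and solving the resulting linear equation for $\Sigma$ yields the stated rational expression, so the final algebraic step is routine. I expect the main obstacle to be the careful verification of the second bijection: specifically, (i) checking that the possible positions of the first occurrence of $w$ in $sw$ correspond canonically to the set $\{k : c_k=1\}$, and (ii) checking that the inverse reconstruction from a pair $(f,\tau_k)$ genuinely lands in $\Sigma\cdot\{w\}$, without inadvertently creating an earlier copy of $w$ inside the prefix $s'$. Both are elementary checks on how prefixes and suffixes of $w$ align, but they are where all the combinatorial content of the formula is concentrated.
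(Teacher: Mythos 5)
Your proposal is correct and follows essentially the same route as the paper: the auxiliary class $F$ is exactly the paper's $W=\cup_{\ell\ge r}W_\ell$, and your two weighted bijections (append a letter; append $w$ and truncate at the first occurrence) are precisely the two identities \eqref{fine-1} and \eqref{fine-2} that the paper solves for $\Sigma$. Your overlap analysis identifying the admissible truncation shifts with the indices $k$ where $c_k=1$ is the same autocorrelation argument, spelled out in slightly more detail than in the paper.
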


\begin{proof}
For $\ell \ge r$, set
\begin{equation} \label{doublius}
W_\ell := \set{s \in \AAA^\ell \, : \begin{array}{l}
\text{$s_{\ell-r}\, s_{\ell-r+1}\dots s_{\ell-1} = w$ and $s$ does not}\\ \text{contain $w$ in other positions}
\end{array}
}
\end{equation}
and denote by $W(x_{a_1},x_{a_2},\dots,x_{a_A},z)$ be the generating function of the sets $W_\ell$, defined as in \eqref{gen-funct-sigma0}-\eqref{gen-funct-sigma} with $W_\ell$ in lieu of $\Sigma_\ell$.

Let $s \in \Sigma := \cup_{\ell\ge 0} \Sigma_\ell$. By appending a letter to $s$, we obtain a non-empty word either in $\Sigma$ or in $W:=\cup_{\ell\ge r} W_\ell$. Hence the corresponding generating functions satisfy the equation
\begin{equation}\label{fine-1}
\begin{aligned}
& 1+\Big( \sum_{i=1}^A\, x_{a_i}\Big)\, z\, \Sigma(x_{a_1},x_{a_2},\dots,x_{a_A},z)\\ &= \Sigma(x_{a_1},x_{a_2},\dots,x_{a_A},z) + W(x_{a_1},x_{a_2},\dots,x_{a_A},z).
\end{aligned}
\end{equation}
Next, appending the word $w$ to a word $s \in \Sigma$, we obtain either a word in $W$ or a word with two appearances of $w$ in the last $2r-1$ symbols. The latter case occurs if one of the symbols $c_1,\dots,c_{r-1}$ of the autocorrelation vector of $w$ is equal to 1. If $c_i=1$ for some $i\ge 1$ then $(w_i\, w_{i+1}\dots w_{r-1}) = (w_0\, w_1 \dots w_{r-i-1})$, thus, appending $w$ to a word in $\Sigma$ whose last symbols are $(w_0\, w_1\dots w_{i-1})$, we get a word which can be written as a word in $W$ with $(w_{r-i}\, w_{r-i+1}\dots w_{r-1})$ appended at the end. Therefore we obtain a word with two appearances of $w$.

Hence we have the equation
\begin{equation}\label{fine-2}
\begin{aligned}
& \Big( \prod_{i=1}^A\, (x_{a_i})^{N_w(a_i)} \Big)\, z^r\, \Sigma(x_{a_1},x_{a_2},\dots,x_{a_A},z)\\ & = W(x_{a_1},x_{a_2},\dots,x_{a_A},z)\, c_w(x_{a_1},x_{a_2},\dots,x_{a_A},z).
\end{aligned}
\end{equation}
The result follows by solving \eqref{fine-1} and \eqref{fine-2}.
\end{proof}

From \eqref{form-p-2} it follows that\medskip
\[
\begin{aligned}
P(z) =\ & \sum_{n=0}^\infty\, \Big(\sum_{s \in \Sigma_{n+r}}\, \prod_{i=1}^A\, ( p_{a_i} )^{N_s(a_i)} \Big)\, z^n\\ =\ & \sum_{n=0}^\infty\, \Big(\sum_{k_1+\dots+k_A=n+r}\, \alpha_{k_1,\dots,k_A,n+r} \, (p_{a_1})^{k_1}\dots (p_{a_A})^{k_A}\Big)\, z^n\\
=\ & z^{-r}\, \sum_{\ell=r}^\infty\, \Big(\sum_{k_1+\dots+k_A=\ell}\, \alpha_{k_1,\dots,k_A,\ell} \, (p_{a_1})^{k_1}\dots (p_{a_A})^{k_A}\Big) \, z^\ell,
\end{aligned}
\]
hence finally
\begin{equation}\label{definitiva}
P(z) = z^{-r}\, \Big( \Sigma(p_{a_1},p_{a_2},\dots,p_{a_A},z) - \sum_{\ell=0}^{r-1}\, z^\ell\Big).
\end{equation}
Using Lemma \ref{res-fund}, it follows that $P(z)$ is a rational function, with smallest positive pole given by the smallest positive zero of the denominator of $\Sigma(p_{a_1},p_{a_2},\dots,p_{a_A},z)$. Hence the proof of Proposition \ref{prop:escape-rate} is finished.

\section{Proof of Proposition \ref{prop:escape-rate-markov}} \label{app:proof-thm-markov}

We argue as in Appendix \ref{app:proof-thm-iid}. Given a hole $w\in \AAA^{*}_{_\Pi}$ of length $r$ we have by \eqref{sigmas} and \eqref{esses}
\[
p_n = m_{_\Pi}(S_n) = m_{_\Pi} \Big(\set{ \omega\in \AAA^{\N}_{_\Pi}\, :\, \omega^{n+r}\, \in \Sigma_{n+r}}\Big)
\]
for the survival probability. Since, for a given $s=(s_0\, \dots\, s_{\ell-1})\in \{a,b\}^\ell$,
\[
m_{_\Pi}(C_s) = p_{s_0}\, \prod_{j=0}^{\ell-2}\, \pi_{s_j\, s_{j+1}}
\]
where $p=(p_a,p_b)$ is the vector defined in Section \ref{sec:markov}, we find
\[
p_n = \sum_{s \in \Sigma_{n+r}}\, m_{_\Pi}(C_s) = \sum_{s \in \Sigma_{n+r}}\, p_{s_0}\, \prod_{j=0}^{n+r-2}\, \pi_{s_j\, s_{j+1}}.
\]
Using the notation $\mathbf{k}=(k_1,k_2,k_3,k_4)$ and $|\mathbf{k}|:= k_1+k_2+k_3+k_4$, we can then write
\begin{equation}\label{form-p-markov}
p_n = \sum_{|\mathbf{k}|=n+r-1}\, \Big( \alpha^a_{\mathbf{k}}\, p_a\, \pi_{aa}^{k_1}\,\pi_{ab}^{k_2}\,\pi_{ba}^{k_3}\,\pi_{bb}^{k_4} + \alpha^b_{\mathbf{k}}\, p_b\, \pi_{aa}^{k_1}\,\pi_{ab}^{k_2}\,\pi_{ba}^{k_3}\,\pi_{bb}^{k_4} \Big)
\end{equation}
where $\alpha^a_{\mathbf{k}}$ and $\alpha^b_{\mathbf{k}}$ denote the number of words $s \in \Sigma_{n+r}$ which begin with $a$ and $b$, respectively, and for which $\mathbf{k}$ is the vector of the number of appearances of $\pi_{aa},\pi_{ab},\pi_{ba},\pi_{bb}$, respectively, in the measure of the cylinder $C_s$.

In order to obtain a connection with the survival probabilities as expressed in \eqref{form-p-markov}, we need to write the generating function of the sets $\Sigma_\ell$ taking into account the number of transitions from a letter to another, and the first letter of a word. We write
\begin{equation}\label{gf-markov}
\begin{aligned}
& \Sigma(x_a,x_b,y_{aa},y_{ab},y_{ba},y_{bb},z)\\ & := 1 + \Sigma^a(x_a,y_{aa},y_{ab},y_{ba},y_{bb},z) + \Sigma^b(x_b,y_{aa},y_{ab},y_{ba},y_{bb},z)
\end{aligned}
\end{equation}
where
\[
\begin{aligned}
& \Sigma^a(x_a,x_b,y_{aa},y_{ab},y_{ba},y_{bb},z)\\ & := \sum_{\ell=1}^\infty\, \Big[ \sum_{|\mathbf{k}|=\ell-1}\, \Big(\alpha^{a,a}_{\mathbf{k}}\, x_a\, y_{aa}^{k_1}\,y_{ab}^{k_2}\,y_{ba}^{k_3}\,y_{bb}^{k_4}+\alpha^{b,a}_{\mathbf{k}}\, x_b\, y_{aa}^{k_1}\,y_{ab}^{k_2}\,y_{ba}^{k_3}\,y_{bb}^{k_4}\Big) \Big]\, z^\ell
\end{aligned}
\]
is the generating function of words of length at least 2 and with last letter equal to $a$. Here $\alpha^{a,a}_{\mathbf{k}}$ and $\alpha^{b,a}_{\mathbf{k}}$ denote the number of words of length $|\mathbf{k}|+1$ not containing $w$ as a pattern, which begin with $a$ and $b$, respectively, which end with $a$, and for which $\mathbf{k}$ is the vector of the number of appearances of the patterns $aa,ab,ba,bb$.

A similar formulation holds for $\Sigma^b(x_a,x_b,y_{aa},y_{ab},y_{ba},y_{bb},z)$, the generating function of words of length at least 2 and with last letter equal to $b$. With the above notations we have
\[
\alpha^{a,a}_{\mathbf{k}} + \alpha^{a,b}_{\mathbf{k}} = \alpha^{a}_{\mathbf{k}}\, , \qquad \alpha^{b,a}_{\mathbf{k}} + \alpha^{b,b}_{\mathbf{k}} = \alpha^{b}_{\mathbf{k}}.
\]
The analog of \eqref{definitiva} then follows for the generating function of $\{p_n\}$
\begin{equation}\label{definitiva-markov}
P(z) = \sum_{n\ge 0}\, p_n\, z^n = z^{-r}\, \Big( \Sigma(p_a,p_b,\pi_{aa},\pi_{ab},\pi_{ba},\pi_{bb},z) - q(z) \Big)
\end{equation}
where $q(z)$ is a polynomial of degree $r-1$ in $z$. As in Appendix \ref{app:proof-thm-iid}, the proof of Proposition \ref{prop:escape-rate-markov} ends by showing that $\Sigma(x_a,x_b,y_{aa},y_{ab},y_{ba},y_{bb},z) $ is a rational function and finding its denominator.

We repeat the proof of Lemma \ref{res-fund} to prove
\begin{lemma}\label{res-funs-markov}
Let $c_{w,M}(y_{aa},y_{ab},y_{ba},y_{bb},z)$ be the Markovian weighted autocorrelation polynomial of a word $w$ of length $r$. Then the generating function \eqref{gf-markov} is a rational function with denominator given by
\[
\begin{aligned}
& y_{w_0 w_1}\dots y_{w_{r-2} w_{r-1}}\, z^r\, (y_{w_{r-1}w_0} - \chi\, z\, \delta_{w_0,w_{r-1}})\\ &+ \Big(1-(y_{aa}+y_{bb})\, z +\chi \, z^2 \Big)\, c_{w,M}(y_{aa},y_{ab},y_{ba},y_{bb},z),
\end{aligned}
\]
where $\chi:= y_{aa}y_{bb}-y_{ab}y_{ba}$.
\end{lemma}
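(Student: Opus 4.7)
The plan is to mirror the strategy of Lemma \ref{res-fund}, with an additional level of bookkeeping by the last letter of a word, which is required because the Markov weight of a word depends on its consecutive pairs of letters. Using the decomposition $\Sigma = 1 + \Sigma^a + \Sigma^b$ from \eqref{gf-markov} and the fact that every word of the set $W$ defined in \eqref{doublius} ends in $w_{r-1}$, the Markov analog of \eqref{fine-1} is obtained by appending a letter $\beta \in \{a,b\}$ to $s \in \Sigma$: the resulting word lies in $\Sigma^\beta$ or (only if $\beta = w_{r-1}$) in $W$, and its weight is multiplied by $y_{\alpha\beta}z$ when $s$ is nonempty ending in $\alpha$, and by $x_\beta z$ when $s$ is empty. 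This yields, for $\beta \in \{a,b\}$,
\[
x_\beta z + y_{a\beta}z\,\Sigma^a + y_{b\beta}z\,\Sigma^b = \Sigma^\beta + W\cdot\delta_{\beta,w_{r-1}}.
\]

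For the Markov analog of \eqref{fine-2}, appending $w$ to $s \in \Sigma$ yields, after summing, a generating function $\mu(w)z^r[x_{w_0} + y_{aw_0}\Sigma^a + y_{bw_0}\Sigma^b]$, where $\mu(w) := \prod_{j=0}^{r-2}y_{w_j w_{j+1}}$. As in the proof of Lemma \ref{res-fund}, each word $sw$ decomposes uniquely as $\tilde s\tilde u$ with $\tilde s \in W$ and $\tilde u$ a suffix of $w$ of length $i$ satisfying $c_i = 1$. The crucial observation in the Markov setting is that, when $c_i = 1$, the overlap condition forces $w_{r-1} = w_{r-1-i}$, so the boundary transition weight $y_{w_{r-1} w_{r-i}}$ between the last letter of $\tilde s$ and the first letter of $\tilde u$ equals $y_{w_{r-1-i} w_{r-i}}$; together with the internal transitions of $\tilde u$, this exactly reproduces the $i$-th term of $c_{w,M}$. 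The resulting identity is
\[
\mu(w)z^r\bigl[x_{w_0} + y_{aw_0}\Sigma^a + y_{bw_0}\Sigma^b\bigr] = W\cdot c_{w,M}(y_{aa},y_{ab},y_{ba},y_{bb},z).
\]

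Eliminating $W$ from the three identities produces a $2 \times 2$ linear system for $(\Sigma^a, \Sigma^b)$ whose coefficient matrix is
\[
c_{w,M}\begin{pmatrix} 1-y_{aa}z & -y_{ba}z\\ -y_{ab}z & 1-y_{bb}z\end{pmatrix} + \mu(w)z^r\begin{pmatrix}\delta_{a,w_{r-1}}y_{aw_0} & \delta_{a,w_{r-1}}y_{bw_0}\\ \delta_{b,w_{r-1}}y_{aw_0} & \delta_{b,w_{r-1}}y_{bw_0}\end{pmatrix}.
\]
The first summand has determinant $c_{w,M}^2(1 - (y_{aa}+y_{bb})z + \chi z^2)$, and the second summand is rank one. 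A direct expansion of the full determinant, followed by a four-case analysis on $(w_0,w_{r-1}) \in \{a,b\}^2$, shows that the cross term simplifies to $c_{w,M}\,\mu(w)z^r(y_{w_{r-1}w_0} - \chi z\,\delta_{w_0,w_{r-1}})$; the key algebraic identity driving this collapse is that $y_{ab}y_{bw_0} - y_{aw_0}y_{bb} = -\chi$ when $w_0 = w_{r-1} = a$ and vanishes when $w_0 \neq w_{r-1}$, with symmetric statements for $w_{r-1} = b$. An overall factor of $c_{w,M}$ cancels in Cramer's rule, leaving the denominator stated in the lemma.

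The main obstacle is the Markov analog of \eqref{fine-2}: one has to verify that the boundary transition weight at the junction of $\tilde s$ and $\tilde u$ reindexes correctly (via $w_{r-1} = w_{r-1-i}$) to match the definition of $c_{w,M}$; without this, the sum over overlaps would fail to factor as $W\cdot c_{w,M}$. The remaining linear-algebra computation is routine, but the compact form $y_{w_{r-1}w_0} - \chi z\,\delta_{w_0,w_{r-1}}$ of the final expression relies on the specific structure of $\chi$ and is a pleasant consequence of a $2\times 2$ determinant identity.
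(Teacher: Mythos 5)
Your proof is correct and follows essentially the same route as the paper: the same three functional equations (appending a letter $a$ or $b$, and appending the word $w$), followed by elimination of $W$ and a case analysis on $(w_0,w_{r-1})$ to identify the common denominator, with your overlap check $w_{r-1}=w_{r-1-i}$ correctly justifying the factorization $W\cdot c_{w,M}$ that the paper states without detail. The rank-one-update determinant computation and the cancellation of one factor $c_{w,M}$ in Cramer's rule reproduce exactly the stated denominator $y_{w_0w_1}\cdots y_{w_{r-2}w_{r-1}}z^r(y_{w_{r-1}w_0}-\chi z\,\delta_{w_0,w_{r-1}})+\bigl(1-(y_{aa}+y_{bb})z+\chi z^2\bigr)c_{w,M}$.
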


\begin{proof}
For $\ell\ge r$, let $W_\ell$ as in \eqref{doublius}, and $W:=\cup_{\ell\ge r} W_\ell$. Also we use the notation $W(x_a,x_b,y_{aa},y_{ab},y_{ba},y_{bb},z)$ for the generating function of the sets $W_\ell$.

Let $s\in \Sigma = \cup_{\ell\ge 0}\, \Sigma_\ell$. By appending a letter to $s$, we obtain a non-empty word in $\Sigma$ or in $W$, but now we need to distinguish between the cases where we append the letter $a$ or $b$. We find the following equations for the generating functions defined in \eqref{gf-markov} (we drop the dependence on most of the variables for reasons of readability). If we append the letter $a$, we might find a word in $W$ only if $w_{r-1}=a$, and we do not find it ending with $b$, hence
\begin{equation}\label{fine-app-a}
x_a\, z + \Sigma^a(z)\, y_{aa}\, z + \Sigma^b(z) \, y_{ba}\, z = \Sigma^a(z) + W(z)\, \delta_{a, w_{r-1}}.
\end{equation}
Analogously,
\begin{equation}\label{fine-app-b}
x_b\, z + \Sigma^a(z)\, y_{ab}\, z + \Sigma^b(z) \, y_{bb}\, z = \Sigma^b(z) + W(z)\, \delta_{b, w_{r-1}}.
\end{equation}
Next, appending the word $w$ to a word $s \in \Sigma$, we obtain either a word in $W$ or a word with two appearances of $w$ in the last $2r-1$ symbols, and again this is regulated by the Markovian weighted autocorrelation polynomial of $w$. We find the equation
\begin{equation}\label{fine-app-w}
\begin{aligned}
& x_{w_0}\, y_{w_0 w_1}\dots y_{w_{r-2} w_{r-1}} + \Big( \Sigma^a(z)\, z^r\, y_{a w_0} + \Sigma^b(z)\, z^r\, y_{b w_0} \Big)\, y_{w_0 w_1}\dots y_{w_{r-2} w_{r-1}}\\ &= W(z)\, c_{w,M}(z).
\end{aligned}
\end{equation}
Solving the system given by \eqref{fine-app-a},\eqref{fine-app-b},\eqref{fine-app-w} for $\Sigma^a$, $\Sigma^b$ and $W$ in the four possible cases for $w_0$ and $w_{r-1}$, we find that the solution of the systems is such that $\Sigma(x_a,x_b,y_{aa},y_{ab},y_{ba},y_{bb},z)$ is a rational function with denominator given by
\[
\begin{aligned}
& y_{w_0 w_1}\dots y_{w_{r-2} w_{r-1}}\, z^r\, \Big(y_{w_{r-1}w_0} - (y_{aa}y_{bb}-y_{ab}y_{ba})\, z\, \delta_{w_0,w_{r-1}}\Big)\\ & + \Big(1-(y_{aa}+y_{bb})\, z +(y_{aa}y_{bb}-y_{ab}y_{ba})\, z^2 \Big)\, c_{w,M}(z).
\end{aligned}
\]
The proposition is proved.
\end{proof}

Using now \eqref{definitiva-markov}, with the relations $\pi_{aa}+\pi_{ab}=\pi_{ba}+\pi_{bb}=1$, we find that the denominator of $P(z)$ is given by the polynomial $\tau_{w,\Pi}$ in \eqref{pol-tau-markov}.

%-----------------------------------------------------------------


\begin{thebibliography}{99}

%\bibitem{AB}  V.S. Afraimovich, L.A. Bunimovich, \emph{Which hole is leaking the most: a topological approach to study open systems}, Nonlinearity \textbf{23} (2010), no. 3, 643--656.

\bibitem{APT13} E.G. Altmann, J.S.E. Portela, T. T\'el, \emph{Leaking chaotic systems}, Rev. Modern Phys. \textbf{85} (2013), 869--918.

\bibitem{BC} C. Bonanno, I. Chouari, \emph{Escape rates for the Farey map with approximated holes},  Internat. J. Bifur. Chaos Appl. Sci. Engrg. \textbf{26} (2016), no. 10, 1650169.

\bibitem{BDT18} H. Bruin, M.F. Demers, M. Todd, \emph{Hitting and escaping statistics: mixing, targets and holes}, Adv. Math. \textbf{328} (2018), 1263--1298.

\bibitem{BY} L. Bunimovich, A. Yurchenko, \emph{Where to place a hole to achieve a maximal escape rate}, Israel J. Math. \textbf{182} (2011), 229--252.

\bibitem{CMT98} N. Chernov, R. Markarian, S. Troubetzkoy, \emph{Conditionally invariant measures for Anosov maps with small holes}, Ergodic Theory Dynam. Systems \textbf{18} (1998), no. 5, 1049--1073.

\bibitem{CDK} G. Cristadoro, G. Knight, M. Degli Esposti,  \emph{Follow the fugitive: an application of the method of images to open systems}, J. Phys. A \textbf{46} (2013), no. 27, 272001. 

\bibitem{DT17a} M.F. Demers, M. Todd, \emph{Equilibrium states, pressure and escape for multimodal maps with holes}, Israel J. Math. \textbf{221} (2017), no. 1, 367--424.

\bibitem{DT17b} M.F. Demers, M. Todd, \emph{Slow and fast escape for open intermittent maps}, Comm. Math. Phys. \textbf{351} (2017), no. 2, 775--835.

\bibitem{DWY10} M.F. Demers, P. Wright, L.-S. Young, \emph{Escape rates and physically relevant measures for billiards with small holes}, Comm. Math. Phys. \textbf{294} (2010), no. 2, 353--388.

\bibitem{DWY12} M.F. Demers, P. Wright, L.-S. Young, \emph{Entropy, Lyapunov exponents and escape rates in open systems}, Ergodic Theory Dynam. Systems \textbf{32} (2012), no. 4, 1270--1301.

\bibitem{DY06} M.F. Demers, L.-S. Young, \emph{Escape rates and conditionally invariant measures}, Nonlinearity \textbf{19} (2006), no. 2, 377--397.

\bibitem{FKMP95} P.A. Ferrari, H. Kesten, S. Martinez, P. Picco, \emph{Existence of quasi-stationary distributions. A renewal dynamical approach}, Ann. Probab. \textbf{23} (1995), no. 2, 501--521.

\bibitem{FS} P. Flajolet, R. Sedgewick, ``Analytic Combinatorics'', Cambridge University Press, Cambridge, 2009.

\bibitem{FMS11} G. Froyland, R. Murray, O. Stancevic, \emph{Spectral degeneracy and escape dynamics for intermittent maps with a hole}, Nonlinearity \textbf{24} (2011), no. 9, 2435--2463.

%\bibitem{GDA} O. Georgiu, C.P. Dettmann, E.G. Altmann, \emph{Faster than expected escape for a class of fully chaotic maps}, Chaos \textbf{22} (2012), no. 4, 043115

\bibitem{agar} C. Haritha, N. Agarwal, \emph{Product of expansive Markov maps with hole}, Discrete Contin. Dyn. Syst. \textbf{39} (2019), no. 10, 5743--5774.

\bibitem{agar2} C. Haritha, N. Agarwal, \emph{Subshifts of finite type with a hole}, \emph{J. Aust. Math. Soc.}, to appear. Available from: \url{https://doi.org/10.1017/S1446788722000052}.

\bibitem{KL09} G. Keller, C. Liverani, \emph{Rare events, escape rates and quasistationarity: some exact formulae}, J. Stat. Phys. \textbf{135} (2009), no. 3, 519--534.

\bibitem{LM03} C. Liverani, V. Maume-Deschamps, \emph{Lasota-Yorke maps with holes: conditionally invariant probability measures and invariant probability measures on the survivor set}, Ann. Inst. H. Poincar\'e Probab. Statist. \textbf{39} (2003), no. 3, 385--412.

\bibitem{MK} S. Munday, G. Knight,  \emph{Escape rate scaling in infinite measure preserving systems}, J. Phys. A \textbf{49} (2016), no. 8, 085101.

\bibitem{PY79} G. Pianigiani, J.A. Yorke, \emph{Expanding maps on sets which are almost invariant. Decay and chaos}, Trans. Amer. Math. Soc. \textbf{252} (1979), 351--366.

\bibitem{CdB02} H. van den Bedem, N. Chernov, \emph{Expanding maps of an interval with holes}, Ergodic Theory Dynam. Systems \textbf{22} (2002), no. 3, 637--654.

\end{thebibliography}
\end{document}